\newtheorem{theorem}{Theorem}[section]
\newtheorem{lemma}[theorem]{Lemma}
\newtheorem{cor}[theorem]{Corollary}
\newtheorem{prop}[theorem]{Proposition}
\newtheorem{remark}[theorem]{Remark}
\newtheorem{definition}[theorem]{Definition}
\newtheorem{question}[theorem]{Question}
\newtheorem{example}[theorem]{Example}
\newtheorem{conj}[theorem]{Conjecture}
\newcommand{\D}{\mathbb{D}}
\newcommand{\T}{\mathbb{T}}
\newcommand{\Pol}{\mathcal{P}}
\newcommand{\Hol}{\operatorname{Hol}}
\begin{document}

\title{Some Remarks on Shanks-type Conjectures}

\author{Christopher Felder\footnote{\textit{Department of Mathematics, Indiana University, Bloomington, Indiana 47405.}\\ Email: \textit{cfelder@iu.edu}}}
\date{ }

\maketitle

\begin{abstract}
We discuss the zero sets of two-variable polynomials as they relate to an approximation problem in the Hardy space on the bidisk. 
\end{abstract}


\section{Introduction}
Let $\D := \left\{ z \in \mathbb{C} : |z|<1\right\}$ denote the open unit disk and, for $d$ a positive integer,  let  $\D^d = \times_{j = 1}^d\D $ denote the polydisk of dimension $d$. The present work is concerned with the Hardy spaces on these domains, which are defined as 
\[
H^2(\D^d) := \left\{ f \in \Hol(\D^d) : \sup_{0 \le r < 1} \int_{\T^d} \left| f(rz) \right|^2 \, dm(z) < \infty \right\}, 
\]
where $\Hol(\D^d)$ is the set of holomorphic functions on the polydisk and $dm$ is normalized Lebesgue measure on the $d$-dimensional torus $\T^d$. It is well known that functions in $H^2(\D^d)$ have radial limits almost everywhere on $\T^d$ and that $H^2(\D^d)$ is a closed subspace of $L^2(\T^d, dm)$. 

Given $f \in H^2(\D^d)$, we are interested in polynomials $p$ which solve the minimization problem
\[
\inf_{p \in \Pol_n}\|pf - 1\|, 
\]
where $\Pol_n$ is the collection of analytic polynomials of a given (multi)degree $n$. For fixed $n$ and $f$, the polynomial minimizing the above quantity is known as the $n$-th \textit{optimal polynomial approximant} (OPA) to $1/f$. In particular, we are interested in the zero sets of these polynomials. We refer to results which describe the \textit{location} of the zeros of OPAs as \textit{Shanks-type results}. Primarily, we will be concerned with Shanks-type results which describe whether or not the zeros of an OPA are inside the (open or closed) polydisk.

For example, it is well known that when $d = 1$, OPAs cannot vanish in the closed unit disk. A proof of this can be found in \cite[Theorem~4.2]{MR3614926} but was known to engineers working in digital filter design in the 1970s (see \cite{MR4244844, MR4410997} and the references therein). 

From an engineering perspective, there has been a desire to extend this result to the case $d>1$. This prompted Shanks, Treitel, and Justice  \cite{1162358} to conjecture that if $f \in H^2(\D^2)$ is any polynomial, then the OPAs to $1/f$ cannot vanish inside the closed bidisk.
Within a few years, this conjecture was proved to be false by Genin and Kamp \cite{Genin1975}.
After disproving the conjecture of Shanks et al., the authors in \cite{Genin1975} went on to construct a method for producing polynomials with OPAs having zeros in the bidisk \cite{1454852}. However, their counterexamples to Shanks' conjecture were for the optimal approximants of functions with zeros in the bidisk. In turn, they conjectured the following, which, after almost fifty years, is still unresolved:
\begin{conj}[Weak Shanks Conjecture, \cite{Genin1975}]
If $f \in H^2(\D^2)$ is a polynomial with no zeros in the \textbf{closed} bidisk, then the OPAs to $1/f$ cannot vanish inside the open bidisk.
\end{conj}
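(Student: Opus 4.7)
The plan is to try to replicate the one-variable ``Blaschke swap'' argument, acknowledging upfront that the conjecture has resisted proof for nearly fifty years. In one variable, if $p$ is the OPA to $1/f$ and $p(z_0) = 0$ with $|z_0|\le 1$, one factors $p(z) = (z-z_0)q(z)$ and forms the candidate $\tilde p(z) = c(1-\bar{z_0}z)q(z)$ for a unimodular constant $c$. Because $|z-z_0| = |1-\bar{z_0}z|$ on $\T$, we have $\|\tilde p f\|^2 = \|p f\|^2$, and a direct computation gives
\[
\|\tilde p f - 1\|^2 - \|pf - 1\|^2 \;=\; 2\operatorname{Re}\bigl((p(0)-\tilde p(0))\,f(0)\bigr) \;=\; -2\operatorname{Re}\bigl((z_0+c)\,q(0)f(0)\bigr).
\]
Choosing the unimodular $c$ so that $c\,q(0)f(0)$ is a nonnegative real then makes the right-hand side nonpositive, and strictly negative when $|z_0|<1$, contradicting optimality of $p$.

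For the bidisk, suppose the $n$-th OPA $p_n$ to $1/f$ satisfies $p_n(z_0,w_0)=0$ for some $(z_0,w_0)\in\D^2$. My first step would be to invoke the local Nullstellensatz at $(z_0,w_0)$ to write
\[
p_n(z,w) \;=\; (z-z_0)\,a(z,w) \,+\, (w-w_0)\,b(z,w)
\]
for polynomials $a,b$ of appropriate multidegree, and then to form
\[
\tilde p_n(z,w) \;=\; c_1(1-\bar{z_0}z)\,a(z,w) \,+\, c_2(1-\bar{w_0}w)\,b(z,w)
\]
and try to choose unimodular $c_1,c_2$ so that $\|\tilde p_n f - 1\| < \|p_n f - 1\|$, exploiting strict stability of $f$ (so that $1/f \in H^\infty(\D^2)$ and one has good control on the relevant inner products) in order to imitate the one-variable comparison above.

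The main obstacle is that the crucial identity $|z-z_0|=|1-\bar{z_0}z|$ holds on $\T$ but fails on the bulk of $\T^2$ and, more fundamentally, the Nullstellensatz representation is a sum rather than a product, so the substitution does not preserve $\|p_n f\|^2$ even up to lower-order corrections. Relatedly, $\C[z,w]$ is not a PID and there is no workable inner--outer factorization in $H^2(\D^2)$, so the one-variable ``peel off a zero'' strategy has no direct analogue. I would therefore expect the general conjecture to require genuinely new tools. As realistic intermediate targets I would first treat the tensor case $f(z,w)=f_1(z)f_2(w)$, where the OPA decouples and the result reduces to $d=1$; then the low-bidegree cases such as $n=(1,1)$ by direct computation via the orthogonality characterization $(p_n f - 1)\perp z^j w^k f$ for $0\le j\le n_1$, $0\le k\le n_2$; and finally perturbative regimes in which $f$ is close to a polynomial in a single variable or close to a constant. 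Numerical exploration with $f$ strictly stable on $\overline{\D^2}$ but with zeros approaching the distinguished boundary should guide which of these perturbation-style swap arguments remains viable.
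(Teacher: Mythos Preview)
There is no proof in the paper to compare against: the Weak Shanks Conjecture is stated precisely as an open problem, and the paper says explicitly that it does not resolve it. Your proposal is, appropriately, a research outline rather than a proof, and you correctly diagnose the central obstruction: the one-variable Blaschke swap relies on factoring out a linear factor $(z-z_0)$ and on the identity $|z-z_0|=|1-\bar z_0 z|$ on $\T$, neither of which survives in two variables because $\C[z,w]$ is not a PID and the Nullstellensatz decomposition is additive rather than multiplicative. So the ``genuine gap'' here is simply that no proof exists, in the paper or elsewhere.

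A few remarks on your intermediate targets relative to what the paper does establish. First, the paper's partial Shanks-type result (Theorem~\ref{disguise}) covers functions of the form $f(z,w)=h(\chi_k)$, i.e.\ one-variable functions in disguise; this is adjacent to but strictly narrower than your tensor case. Second, your assertion that for $f(z,w)=f_1(z)f_2(w)$ ``the OPA decouples and the result reduces to $d=1$'' is only correct when the approximating space is itself the tensor product $\Pol_{n_1}(z)\otimes\Pol_{n_2}(w)$; under the paper's degree-lex convention the spaces $\Pol_n$ are \emph{not} tensor products for most $n$, and whether OPAs of products factor is posed in the paper as an open question (Question~\ref{factor}). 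Third, the paper already disposes of the degree-one case via Proposition~\ref{p-n-factor} (any linear factor of an OPA must have its root outside $\overline{\D}$), so your low-bidegree exploration should start at bidegree $(1,1)$ in your convention, or $n\ge 2$ in the paper's. Finally, the paper's Theorem~\ref{det-form} gives a determinantal reformulation of exactly when $p_n^*[f]$ vanishes at a point of $\D^2$; that, together with the reflection observation of Theorem~\ref{hound} (which shows the OPAs are blind to the location of certain polynomial factors of $f$, and hence cautions against expecting the swap heuristic to succeed), is probably a more tractable entry point for your perturbative and numerical program than the Blaschke-swap ansatz.
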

Apart from their examples, it is unclear what additional intuition guided the authors in \cite{Genin1975} to make this conjecture.

The purpose of this note is to communicate some remarks on Shanks-type conjectures in general. Unfortunately, we do not provide a resolution to the Weak Shanks Conjecture, but do provide some contributions and observations in the direction of resolving the conjecture. In short, the difficulty of this conjecture is in determining a precise relationship between the zero set of a function and the zero sets of its OPAs.

The outline of the paper is as follows: 
\begin{itemize}
\item We will provide notational conventions and more thoroughly discuss background in Section \ref{background}.

\item In Section \ref{factorizations}, we show that that if an OPA has a one-variable factor, then that factor must vanish outside of the closed disk (Theorem \ref{p-n-factor}). We also show that if $f \in H^2(\D^2)$ is a polynomial of a certain form, then the OPAs to $1/f$ are, up to a multiplicative constant, the same as the OPAs to the reciprocal of the \textit{reflection} of $f$ (Theorem \ref{hound}). 

\item In Section \ref{shanks-theorems}, we prove that if $f \in H^2(\D^2)$ is a function of one variable or a one-variable function `in disguise,' then the OPAs to $1/f$ cannot vanish in the closed bidisk (Theorem \ref{disguise}). We also provide a result which characterizes precisely when a function with no zeros in the bidisk has OPAs with no zeros in the bidisk (Theorem \ref{det-form}). 

\item Section \ref{w-inner} concerns weakly-inner functions.
\end{itemize}


\section{Notation and Background}\label{background}
For relevant background on Hilbert spaces on polydisks we suggest \cite{Rudin} as a general reference, but will list a few elementary facts here: 

\begin{itemize}
\item The space $H^2(\D^d)$ is a Hilbert space; for 
\[
f(z_1, \ldots, z_d) = \sum_{j_1, \ldots, j_d\ge 0} a_{j_1,\ldots j_d}z_1^{j_1} \dots z_d^{j_d}
\]
and 
\[
g(z_1, \ldots, z_d) = \sum_{j_1, \ldots, j_d\ge 0} b_{j_1,\ldots, j_d}z_1^{j_1} \ldots z_d^{j_d},
\]
both elements of $H^2(\D^d)$,
we have 
\[
\langle f, g \rangle = \sum_{j_1, \ldots, j_d\ge 0} a_{j_1,\ldots j_d} \overline{b_{j_1,\ldots, j_d}}.
\]
We mention that functions in $H^2(\D^d)$ have radial limits almost everywhere on  $\T^d$ and this inner product may also be expressed as the inner product on $L^2(\T^d, dm)$:
\[
\langle f, g \rangle =  \int_{\T^d} f \overline{g} \ dm,
\]
where $dm$ is normalized Lebesgue measure. 
\item Further, $H^2(\D^d)$ is a \textit{reproducing kernel Hilbert space} on $\D^d$; this means, for every $\alpha = (\alpha_1, \ldots, \alpha_d) \in \D^d$, the linear functional of point evaluation 
\[
f \mapsto f(\alpha)
\]
is bounded. By the Riesz Representation Theorem, for each $\alpha \in \D^d$ there is a unique element $k_{\alpha} \in H^2(\D^d)$ so that for any $f \in H^2(\D^d)$, we have
\[
f(\alpha) = \langle f, k_{\alpha} \rangle. 
\]
The element $k_{\alpha}$ is called the \textit{reproducing kernel} at $\alpha$ and can be verified to be the function
\[
k_{\alpha}(z_1, \ldots, z_d) = \prod_{j=1}^d\frac{1}{1 - \overline{\alpha_j}z_j}.
\]
\item The set of polynomials in $d$ variables is dense in $H^2(\D^d)$. 
\end{itemize} 

We will be concerned with the cases $d=1, 2$. In this setting, we will use the indeterminates $z$ and $w$ instead of $z_1$ and $z_2$, respectively. 
We will also abuse notation and use $zf$ or $wf$ to mean the action of the multiplication operator on the function $f\in H^2(\D^2)$ induced by multiplication of these variables.

\subsection{Optimal Polynomial Approximants}
Let us begin by recalling the degree lexicographic order of $d$-variable  polynomials, where monomials are ordered by increasing total degree and ties are broken lexicographically. This is the usual order when $d=1$. For two variables, we use the notation
\[
\chi_0 = 1, \chi_1 = z, \chi_2 = w, \chi_3 = z^2, \chi_4 = zw,  \chi_5 = w^2, \ldots 
\]
and so on. 
For $n$ a positive integer, if we define $\Pol_n := \operatorname{span}\{\chi_0, \ldots, \chi_n \}$, then we have a chain of subspaces
\[
\Pol_0 \subset \Pol_1 \subset \dots \subset \Pol_n \subset \Pol_{n+1} \subset \cdots . 
\]
A standard functional analysis argument then shows, given $f \in H^2(\D^d)$, the sequence of orthogonal projections
\[
\Pi_n \colon H^2(\D^d) \to  \Pol_n f : = \{pf : p\in \Pol_n\}
\]
converges, in the strong operator topology, to the orthogonal projection from $H^2(\D^d)$ onto the closure of all polynomial multiples of $f$ in $H^2(\D^d)$ (see, e.g., \cite[Proposition~2.4]{Felder}). We will not use this property of the ordering, but mention that it is useful in certain other contexts, including limits of optimal polynomial approximants, which we formally discuss now. 

\begin{definition}[OPA]
Let $f \in H^2(\D^d)$. Given a non-negative integer $n$, the \textit{$n$-th optimal polynomial approximant (OPA)} to $1/f$ in $H^2(\D^d)$ is the polynomial solving the minimization problem
\[
\min_{q \in \Pol_n}\|qf - 1\|_p.
\]
This polynomial is given by the orthogonal projection of $1$ onto the subspace $\Pol_n f$ (therefore uniquely exists), and will be denoted by
\[
p_n^*[f].
\]
\end{definition}
An elementary linear algebra exercise shows that the coefficients of an OPA, say,  $p_n^*[f] = \sum_{j=0}^n a_j \chi_k$, can be recovered from the linear system
\[
\left(\langle \chi_j f, \chi_k f \right)_{0 \le, j,k \le n} \left(a_0, \ldots, a_n\right)^T 
= \left(\overline{f(0)}, 0, \ldots, 0\right)^T.
\]
We refer to this linear system as the \textit{optimal system} (with respect to $f$, of degree $n$). Note that the Gram matrix above is of full rank when $f$ is not identically zero. Consequently, $p_n^*[f] \equiv 0$ if and only if $f$ vanishes at the origin; later, we make the assumption $f(0) \neq 0$ in order to avoid this case. 
See \cite{MR4410997, MR4327435} for more background on OPAs in several variables. 

Let us provide an example to illustrate the methods at play. 

\begin{example}
Let $f(z, w) = 1 - z - w$. For $n = 0$ and $p_0^*[f] = a_0$, we have
\[
\|f\|^2 a_0 = \langle 1, f \rangle, 
\]
which gives
\[
p_0^*[f](z,w) = \frac{\langle 1, f \rangle}{\|f\|^2} = \frac{1}{3}.
\]
For $n=1$ and $p_1^*[f](z,w) = a_0 + a_1z$, we can recover $a_0, a_1$ as the solution to the system
\[
\begin{bmatrix}
\|f\|^2 & \langle f, zf \rangle\\
\langle zf, f \rangle & \|zf\|^2
\end{bmatrix}
\begin{bmatrix}
a_0 \\ 
a_1
\end{bmatrix}
= \begin{bmatrix}
\langle 1, f \rangle \\ 
0
\end{bmatrix}.
\]
Making the appropriate computations, this yields
\[
\begin{bmatrix}
3 & -1\\
-1 & 3
\end{bmatrix}
\begin{bmatrix}
a_0 \\ 
a_1
\end{bmatrix}
= \begin{bmatrix}
1 \\ 
0
\end{bmatrix}.
\]
In turn, we have 
\[
p_1^*[f](z,w) = \frac38 + \frac18 z.
\]
Carrying on in a similar manner, the coefficients of $p_2^*[f](z,w) = a_0 + a_1z + a_2 w$ can be recovered via the system
\[
\begin{bmatrix}
\|f\|^2 & \langle f, zf \rangle & \langle f, wf \rangle\\
\langle zf, f \rangle & \|f\|^2 & \langle zf, wf \rangle\\
\langle wf, f \rangle & \langle wf, zf \rangle & \|f\|^2
\end{bmatrix}
\begin{bmatrix}
a_0 \\ 
a_1\\
a_2
\end{bmatrix}
= \begin{bmatrix}
\langle 1, f \rangle \\ 
0\\
0
\end{bmatrix},
\]
which is
\[
\begin{bmatrix}
3 & -1 & -1\\
-1 & 3 & 1\\
-1 & 1 & 3
\end{bmatrix}
\begin{bmatrix}
a_0 \\ 
a_1\\
a_2
\end{bmatrix}
= \begin{bmatrix}
1 \\ 
0 \\
0
\end{bmatrix},
\]
and gives
\[
p_2^*[f](z,w) = \frac25 + \frac{1}{10} z + \frac{1}{10} w.
\]       
\end{example}

\subsection{The Shanks Conjecture}
Let us state a theorem in the one-variable Hardy space.
As previously mentioned, there are several proofs of this theorem, one of which can be found in \cite[Theorem~4.2]{MR3614926}, which has a surprising connection with the zeros of orthogonal polynomials on the unit circle. We will provide a different proof here that uses only the Fundamental Theorem of Algebra and the Cauchy-Schwarz Inequality.

\begin{theorem}\label{OPA-bnd}
Let $f \in H^2(\D)$ with $f(0) \neq 0$. For each $n\ge 0$, the zeros of $p_n^*[f]$ lie outside the closed unit disk. 
\end{theorem}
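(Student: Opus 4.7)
The plan is a short proof by contradiction using only two ingredients: the orthogonality characterization of $p := p_n^*[f]$, namely $\langle pf, qf\rangle = \overline{q(0)f(0)}$ for every $q \in \Pol_n$ (which restates $pf-1 \perp \Pol_n\cdot f$), and the Cauchy-Schwarz inequality on $H^2(\D)$. Suppose for contradiction that $p(\alpha) = 0$ for some $\alpha \in \overline{\D}$.

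First I would rule out $\alpha = 0$. Setting $q = p$ in the orthogonality identity gives $\|pf\|^2 = \overline{p(0)f(0)}$, so $p(0)f(0)$ is real and nonnegative. If $p(0) = 0$, then $pf \equiv 0$, forcing $p \equiv 0$; but the constant approximant $c = \overline{f(0)}/\|f\|^2$ strictly improves on $0$ whenever $f(0) \neq 0$, contradicting optimality. Hence $p(0)\neq 0$ and $\alpha \neq 0$. Applying the Fundamental Theorem of Algebra (in its Factor Theorem form), write $p(z) = (z-\alpha)r(z)$ with $r \in \Pol_{n-1}$ nonzero; in fact $r(0) = -p(0)/\alpha \neq 0$. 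Now plug $q = zr \in \Pol_n$ into the orthogonality identity. Since $(zr)(0) = 0$, the right-hand side vanishes and we get $\langle pf, zrf\rangle = 0$. Expanding $pf = zrf - \alpha rf$ and using that multiplication by $z$ is an isometry on $H^2(\D)$ (so $\|zrf\| = \|rf\|$), this simplifies to $\|rf\|^2 = \alpha\,\langle rf, zrf\rangle$, i.e.,
\[
\langle rf, zrf\rangle = \frac{\|rf\|^2}{\alpha}.
\]
Cauchy-Schwarz bounds the absolute value of the left-hand side by $\|rf\|\cdot\|zrf\| = \|rf\|^2$, which yields $|\alpha| \geq 1$.

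The main obstacle, as I see it, is excluding the boundary case $|\alpha| = 1$. There equality holds in Cauchy-Schwarz, so $rf$ and $zrf$ must be linearly dependent in $H^2(\D)$; writing $zrf = \lambda\, rf$ for some unimodular $\lambda$ gives $(z-\lambda)\,rf \equiv 0$ on $\D$. Since $z - \lambda$ is a nonzero polynomial, this forces $rf \equiv 0$ and hence $r \equiv 0$, contradicting $r(0) \neq 0$. Thus in fact $|\alpha| > 1$, so every zero of $p_n^*[f]$ lies strictly outside the closed unit disk. The argument uses exactly the toolkit advertised by the author: the Factor Theorem / FTA to peel off a root, and Cauchy-Schwarz (including its equality case, which is essentially the statement that the shift on $H^2(\D)$ has no eigenvectors) to force the root outside $\overline{\D}$.
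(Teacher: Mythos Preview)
Your proof is correct and follows essentially the same route as the paper: factor $p_n^*[f](z) = (z-\alpha)r(z)$, use orthogonality of $p_n^*[f]f - 1$ against $zrf$ to obtain $\|rf\|^2 = \alpha\,\langle rf, zrf\rangle$, and apply Cauchy--Schwarz with its equality case. Your treatment is in fact slightly more careful, explicitly ruling out $\alpha = 0$ and spelling out why $rf$ and $zrf$ cannot be collinear, whereas the paper leaves these points implicit.
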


\begin{proof}
The case $n = 0$ is trivial, so let us proceed assuming $n>0$.

Let $w \in \mathbb{C}$, suppose $p_n^*[f](w) = 0$, and put $p_n^*[f](z) = (z-w)\tilde{p}(z)$. 
As $p_n^*[f] f$ is the orthogonal projection of 1 onto $\bigvee\{f, zf, \ldots, z^nf\}$, we have
\[
1 - p_n^*[f]f \perp z\tilde{p}f.
\]
In turn, this yields
\begin{align*}
&0 = \langle 1 - p_n^*[f],  z\tilde{p}f \rangle \\
\implies & 0 = \langle 1 - (z- w)\tilde{p} f,  z\tilde{p}f \rangle \\
\implies & w\langle \tilde{p} f,  z\tilde{p}f \rangle = \langle z\tilde{p}f - 1,  z\tilde{p}f \rangle \\
\implies & w = \frac{\langle z\tilde{p}f - 1,  z\tilde{p}f \rangle}{\langle \tilde{p} f,  z\tilde{p}f \rangle}\\
\implies & w = \frac{\|z\tilde{p}f\|^2 - \langle 1, z\tilde{p}f\rangle}{\langle \tilde{p}f, z\tilde{p}f\rangle}\\
\implies & w = \frac{\|\tilde{p}f\|^2}{\langle \tilde{p}f, z\tilde{p}f\rangle}\ ,
\end{align*}
where the last implication above follows from the fact that multiplication by $z$ is an isometry on $H^2(\D)$ and $1 = k_0$ is the reproducing kernel at the origin. 
Cauchy-Schwarz then yields
\[
|w| > \frac{\|\tilde{p}f\|^2}{\|\tilde{p}f\| \|z\tilde{p}f\|} = 1, 
\]
where we note the inequality is strict as $\tilde{p}f$ and $z\tilde{p}f$ are not collinear. 
\end{proof}

It is natural to ask if a similar result holds in $H^2(\D^2)$. As previously mentioned,  Shanks, Treitel, and Justice \cite{1162358}, in studying digital filter design, conjectured the following:
\begin{conj}[Strong Shanks Conjecture]
\textit{If $f \in H^2(\D^2)$ is any polynomial, then the OPAs to $1/f$ cannot vanish inside the closed bidisk.}
\end{conj}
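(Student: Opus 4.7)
The plan is to try to generalize the proof of Theorem~\ref{OPA-bnd} to two variables. Suppose $(\alpha,\beta) \in \overline{\D^2}$ is a zero of $p_n^*[f]$. In one variable we factored out the linear root, invoked orthogonality of $1 - p_n^*[f]f$ against $z\tilde{p}f$, and applied Cauchy--Schwarz using the isometric action of $M_z$ on $H^2(\D)$ to force $|\alpha|>1$. A natural first attempt in two variables is to seek a factorization $p_n^*[f] = (z-\alpha)\tilde{p}$ or $p_n^*[f] = (w-\beta)\tilde{p}$ in $\mathbb{C}[z,w]$, and then rerun the one-variable argument, now using the orthogonality $1-p_n^*[f]f \perp z\tilde{p}f$ (respectively $w\tilde{p}f$) together with the fact that both $M_z$ and $M_w$ are isometries on $H^2(\D^2)$.

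The main obstacle appears immediately: the polynomial ring $\mathbb{C}[z,w]$ is not principal, and the maximal ideal at $(\alpha,\beta)$ is generated by \emph{two} linear forms, not one. Thus vanishing of $p_n^*[f]$ at $(\alpha,\beta)$ does not force divisibility by either $z-\alpha$ or $w-\beta$ alone. One might try to work on the slice $w=\beta$ and factor $z-\alpha$ out of the univariate polynomial $p_n^*[f](\,\cdot\,,\beta)$, but the resulting slice factorization does not lift to an equality in $H^2(\D^2)$, and the orthogonality relations that drove the one-variable proof --- which live in the full two-variable inner product --- cannot be transferred this way.

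In fact, as the introduction recalls, this obstruction is genuine and not merely an artifact of the method: explicit polynomial counterexamples exist, so \emph{no} proof along these lines can succeed. The breakdown is nevertheless informative. The zero set of an OPA in two variables is a one-dimensional algebraic curve rather than a finite point set, and the finite collection of orthogonality relations $1 - p_n^*[f]f \perp \chi_j f$ for $0 \le j \le n$ does not impose enough constraints to pin down the location of this curve relative to $\overline{\D^2}$. Any viable Shanks-type statement must therefore impose additional hypotheses on $f$, which is the spirit of the Weak Shanks Conjecture and the results the paper proposes in Sections~\ref{shanks-theorems} and~\ref{w-inner}.
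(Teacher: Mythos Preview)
The statement under review is a \emph{conjecture} that the paper explicitly records as \emph{false}: it was disproved by Genin and Kamp, as noted both in the introduction and immediately after the conjecture is stated. Consequently, there is no proof in the paper to compare your attempt against.

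Your submission is, appropriately, not a proof at all but a diagnosis of why the one-variable argument from Theorem~\ref{OPA-bnd} cannot be transplanted to $H^2(\D^2)$: the maximal ideal at a point of $\C^2$ is not principal, so vanishing at $(\alpha,\beta)$ does not produce a factor $z-\alpha$ or $w-\beta$ to feed into the orthogonality--Cauchy--Schwarz machine. You then correctly observe that this is a genuine obstruction rather than a technical inconvenience, citing the existence of counterexamples. This is exactly the right response to a known-false statement, and your analysis is consistent with the paper's own commentary: Proposition~\ref{p-n-factor} shows that \emph{when} such a one-variable linear factor happens to exist in $p_n^*[f]$, the root is forced outside $\overline{\D}$, so the failure of the Strong Shanks Conjecture is located precisely where you place it---in the absence of such factorizations in general.

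If anything, you could sharpen the write-up by stating at the outset that the conjecture is false and that what follows is an explanation of the obstruction, rather than framing it as a ``plan'' that is then abandoned.
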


Again, this conjecture was quickly proved to be false \cite{Genin1975}. A simplified counterexample can be found in \cite[Example~20]{MR4410997}. Although this particular conjecture is false, it is natural to wonder if a similar result may be true for some other subclass of Hardy space functions. In particular, one may ask:

\begin{question}\label{non-zero}
For which functions $f \in H^2(\D^2)$ is it true that the OPAs to $1/f$ don't vanish in the (closed) bidisk? Is there a function-theoretic characterization of such functions?
\end{question}

If $f\in H^2(\D^2)$ has OPAs which do not vanish in the closed bidisk, we will say that $f$ satisfies a \textit{Shanks-type result}.


\section{Factorizations}\label{factorizations}
Let us begin by exploring what happens under the assumption that an OPA has a one-variable factor. 

\begin{prop}\label{p-n-factor}
Let $f\in H^2(\D^2)$ with $f(0) \neq 0$. If 
\[
p_n^*[f](z,w) = (z - \alpha)\tilde{p}(z,w),
\]
then
\[
|\alpha|>1.
\]
Similarly, if $p_n^*[f](z,w) = (w - \beta)\tilde{q}(z,w)$, then $|\beta| > 1$.
\end{prop}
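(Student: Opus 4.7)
The plan is to mimic the proof of Theorem \ref{OPA-bnd} almost verbatim, once we check that the key test function lands in the right subspace. Write $p_n^*[f] = (z-\alpha)\tilde{p}$. Since $p_n^*[f]f$ is the orthogonal projection of $1$ onto $\Pol_n f$, we want to test orthogonality of $1 - p_n^*[f]f$ against $z\tilde{p} f$, which requires $z\tilde{p} \in \Pol_n$. Granting this, the one-variable computation goes through verbatim: expand
\[
0 = \langle 1 - (z-\alpha)\tilde{p}f,\, z\tilde{p}f\rangle,
\]
solve for $\alpha$, and use that multiplication by $z$ is an isometry on $H^2(\D^2)$ together with $\langle 1, z\tilde{p}f\rangle = (z\tilde{p}f)(0,0) = 0$ to reduce the expression to
\[
\alpha = \frac{\|\tilde{p}f\|^2}{\langle \tilde{p}f, z\tilde{p}f\rangle}.
\]
Cauchy--Schwarz then gives $|\alpha| \ge 1$, with strict inequality because $\tilde{p}f$ and $z\tilde{p}f$ are not collinear (otherwise $z$ would act as a scalar, forcing $f \equiv 0$).

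The technical heart of the argument is therefore showing $z\tilde{p} \in \Pol_n$. Note that $z\tilde{p} = p_n^*[f] + \alpha \tilde{p}$, so it suffices to prove $\tilde{p} \in \Pol_n$. This is where the \emph{degree lexicographic} convention does the work. If $\chi_n$ has total degree $d$, then by definition of degree lex, $\Pol_n$ contains every monomial of total degree strictly less than $d$ (together with some monomials of total degree exactly $d$). Since $p_n^*[f]$ has total degree at most $d$, the quotient $\tilde{p} = p_n^*[f]/(z-\alpha)$ has total degree at most $d-1$, hence all of its monomials lie among $\chi_0,\ldots,\chi_{n-1}$. So $\tilde{p} \in \Pol_{n-1} \subset \Pol_n$, as required.

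The case of a factor $(w-\beta)$ is symmetric: set $\tilde{q}$ via $p_n^*[f] = (w-\beta)\tilde{q}$, observe by the same degree-lex argument that $\tilde{q} \in \Pol_n$ (and hence $w\tilde{q} \in \Pol_n$), and run the same orthogonality/Cauchy--Schwarz computation with $w$ in place of $z$, using that multiplication by $w$ is also an isometry on $H^2(\D^2)$ and that $(w\tilde{q}f)(0,0) = 0$.

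The main obstacle I anticipate is the verification that $z\tilde{p} \in \Pol_n$, which does not follow from any abstract property of the OPA but rather from the specific choice of monomial ordering used to define $\Pol_n$. Once this bookkeeping is dispatched, the analytic content is identical to the disk case: isometry of the shift and the reproducing kernel property at the origin, combined with Cauchy--Schwarz applied to the non-collinear pair $\tilde{p}f,\, z\tilde{p}f$.
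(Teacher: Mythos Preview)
Your proof is correct and follows essentially the same route as the paper's: test orthogonality against $z\tilde{p}f$, solve for $\alpha$, and apply Cauchy--Schwarz using that the shift is an isometry. The paper simply asserts that $z\tilde{p}\in\Pol_n$ without comment, whereas you supply the degree-lex justification explicitly; apart from that extra care, the arguments coincide.
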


\begin{proof}
We mimic the argument in the proof of Theorem \ref{OPA-bnd}. Recall that $\langle p_n^*[f], qf \rangle = 0$ for all polynomials $q \in \Pol_n$ with $q(0) = 0$ (this can be seen by noting that $p_n^*[f]f$ is the orthogonal projection of $1$ onto $\Pol_n f$ and $1$ is the reproducing kernel at the origin in $H^2(\D^2)$).
In particular, taking 
\[
q(z, w) = z\tilde{p}(z,w), 
\]
which is an element of $\Pol_n$, we have
\begin{align*}
0 & = \langle p_n^*[f]f, z\tilde{p}f \rangle \\
&= \langle (z - \alpha)\tilde{p}f, z\tilde{p}f \rangle \\
&= \langle z \tilde{p}f, z\tilde{p}f \rangle - \alpha \langle \tilde{p}f, z\tilde{p}f \rangle.
\end{align*}
In turn, we have 
\[
\alpha = \frac{ \langle z \tilde{p}f, z\tilde{p}f \rangle}{ \langle \tilde{p}f, z\tilde{p}f \rangle}.
\]
Noting that multiplication by $z$ (or $w$) is an isometry on $H^2(\D^2)$, we have 
\begin{align*}
|\alpha| &= \frac{ \| z\tilde{p}f \|^2}{ \left|\langle \tilde{p}f, z\tilde{p}f \rangle\right|} \\
&> \frac{ \| z\tilde{p}f \|^2}{ \|\tilde{p}f\| \| z\tilde{p}f\|} \\
& = 1,
\end{align*}
where we have used the Cauchy-Schwarz inequality with the fact that $\tilde{p}f$ and $z\tilde{p}f$ are not collinear. The same argument holds when $z$ and $w$ are switched. 
\end{proof}

\begin{remark}
This result may be regarded as potential evidence for certain Shanks-type results. 
We also note that this result has an immediate corollary; any function $f \in H^2(\D^2)$ satisfies a Shanks-type result for the first-degree OPA. We will discuss this again following Theorem \ref{det-form}.
\end{remark}

We will now make some observations when $f$ has factorizations of a certain form. First, we need a lemma. 

\begin{lemma}\label{opt-equiv}
Let $f, g \in H^2(\D^2)$ with $f(0) \neq 0$ and $g(0) \neq 0$. If, for $0\le j,k \le n$, we have
\[
\langle \chi_j f, \chi_k f \rangle = \langle \chi_j g, \chi_k g \rangle,
\]
then there exists a constant $c$ so that
\[
p_n^*[f] = c \cdot p_n^*[g].
\]
\end{lemma}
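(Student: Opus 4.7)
The plan is to reduce this to the linear algebra of the optimal system. Recall that the coefficients $(a_0, \ldots, a_n)$ of $p_n^*[f]$ are precisely the solution to
\[
\bigl(\langle \chi_j f, \chi_k f\rangle\bigr)_{0\le j,k\le n}\,(a_0,\ldots,a_n)^T = \bigl(\overline{f(0)}, 0, \ldots, 0\bigr)^T,
\]
and analogously for $g$. The key observation is that the right-hand side in each system is already a scalar multiple of the first standard basis vector $e_1 = (1,0,\ldots,0)^T$, namely $\overline{f(0)}\,e_1$ in one case and $\overline{g(0)}\,e_1$ in the other.

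First, I would denote by $M$ the common Gram matrix, which by hypothesis equals $(\langle \chi_j f, \chi_k f\rangle)_{j,k} = (\langle \chi_j g, \chi_k g\rangle)_{j,k}$. Since $f$ and $g$ are not identically zero (both are nonzero at the origin), $M$ is of full rank and hence invertible. Thus the coefficient vectors for the two OPAs are given by $M^{-1}\overline{f(0)}\,e_1$ and $M^{-1}\overline{g(0)}\,e_1$ respectively, and these are scalar multiples of each other by linearity. Setting $c = \overline{f(0)}/\overline{g(0)}$ (which is well-defined since $g(0)\ne 0$) yields $p_n^*[f] = c \cdot p_n^*[g]$.

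There is essentially no obstacle here beyond unpacking the optimal system, because the proportionality is forced by the fact that the data vector in the optimal system only records the value at the origin. The one bookkeeping point worth flagging in the write-up is that the hypothesis $\langle \chi_j f, \chi_k f\rangle = \langle \chi_j g, \chi_k g\rangle$ for all $0 \le j,k \le n$ guarantees both systems share the \emph{same} invertible coefficient matrix; without this, the two coefficient vectors need not be collinear even though the data vectors are.
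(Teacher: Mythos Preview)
Your proposal is correct and follows essentially the same approach as the paper: both arguments observe that the two optimal systems share the same invertible Gram matrix and have right-hand sides that are scalar multiples of $e_1$, so the solution vectors are proportional with constant $c = \overline{f(0)}/\overline{g(0)}$. Your presentation via $M^{-1}$ is marginally cleaner than the paper's chain of equalities, but the substance is identical.
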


\begin{proof}
Consider the optimal systems
\[
\left(\langle \chi_j f, \chi_k f \right)_{0 \le, j,k \le n} \left(a_0, \ldots, a_n\right)^T 
= \left(\overline{f(0)}, 0, \ldots, 0\right)^T
\]
and 
\[
\left(\langle \chi_j g, \chi_k g \right)_{0 \le, j,k \le n} \left(b_0, \ldots, b_n\right)^T 
= \left(\overline{g(0)}, 0, \ldots, 0\right)^T.
\]
Notice that, for $c = \overline{f(0/g(0)}$, we have 
\begin{align*}
\left(\langle \chi_j f, \chi_k f \right)_{0 \le, j,k \le n} \left(a_0, \ldots, a_n\right)^T 
&= c \cdot \left(\overline{g(0)}, 0, \ldots, 0\right)^T \\
& = c \left(\langle \chi_j g, \chi_k g \right)_{0 \le, j,k \le n} \left(b_0, \ldots, b_n\right)^T \\
& = c \left(\langle \chi_j f, \chi_k f \right)_{0 \le, j,k \le n} \left(b_0, \ldots, b_n\right)^T.
\end{align*}
As each Gram matrix above is invertible, we see 
\[
\left(a_0, \ldots, a_n\right) = c \left(b_0, \ldots, b_n\right).
\]
The result then follows.
\end{proof}

We will employ this lemma but first need to recall a definition. A function $f\in H^2(\D^2)$ is said to be \textit{inner} in $H^2(\D^2)$ if 
\[
|f| = 1  \ \ \text{a.e. on $\T^2$}.
\]
A more thorough discussion of inner functions will occur later but we provide an initial observation here first.

\begin{prop}
Let $f \in H^2(\D^2)$ with $f(0) \neq 0$. For any factorization of the form
\[
f(z,w) = \theta(z, w) \tilde{f}(z,w),
\]
with $\theta$ an inner function, and for every $n\ge0$, there exists a constant $c$ so that
\[
p_n^*[f] = c \cdot p_n^*[\tilde{f}].
\]
\end{prop}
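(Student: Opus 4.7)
The plan is to reduce this proposition to Lemma \ref{opt-equiv} by showing that $f$ and $\tilde{f}$ give rise to identical Gram matrices $(\langle \chi_j \cdot, \chi_k \cdot \rangle)_{0 \le j,k \le n}$. Assuming $\tilde{f} \in H^2(\D^2)$ in the stated factorization, we first note that since $f(0) = \theta(0)\tilde{f}(0) \neq 0$, both $\theta(0)$ and $\tilde{f}(0)$ are nonzero, so both optimal systems are well-posed.

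Next, I would pass to boundary values. Using the fact that the inner product on $H^2(\D^2)$ can be written as an integral over $\T^2$, and that the monomials $\chi_j, \chi_k$ have modulus one on $\T^2$, I would compute
\[
\langle \chi_j f, \chi_k f \rangle \;=\; \int_{\T^2} \chi_j f \, \overline{\chi_k f} \, dm \;=\; \int_{\T^2} \chi_j \tilde{f} \, \overline{\chi_k \tilde{f}} \, |\theta|^2 \, dm.
\]
Since $\theta$ is inner, $|\theta|^2 = 1$ almost everywhere on $\T^2$, so the right-hand side collapses to $\langle \chi_j \tilde{f}, \chi_k \tilde{f} \rangle$. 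This establishes exactly the hypothesis of Lemma \ref{opt-equiv}.

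Applying the lemma yields $p_n^*[f] = c \cdot p_n^*[\tilde{f}]$, and tracing through the proof of the lemma identifies the constant as $c = \overline{f(0)}/\overline{\tilde{f}(0)} = \overline{\theta(0)}$. There is no substantive obstacle here beyond the boundary-value computation; the role of the inner function is purely to cancel out of $|f|^2 = |\tilde{f}|^2$ on $\T^2$, so the whole argument amounts to observing that $f$ and $\tilde{f}$ produce the same Gram matrix. The only conceptual point to flag is the implicit assumption that $\tilde{f}$ itself lies in $H^2(\D^2)$ so that the right-hand inner products are defined in the first place.
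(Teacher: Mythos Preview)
Your proof is correct and follows essentially the same approach as the paper: show that $\langle \chi_j f, \chi_k f \rangle = \langle \chi_j \tilde{f}, \chi_k \tilde{f} \rangle$ for all $j,k$ using the fact that $|\theta|=1$ a.e.\ on $\T^2$ (equivalently, that multiplication by $\theta$ is an isometry), then invoke Lemma~\ref{opt-equiv}. Your version is slightly more detailed in that you verify $\tilde{f}(0)\neq 0$ explicitly and identify the constant $c=\overline{\theta(0)}$, and you correctly flag the implicit assumption that $\tilde{f}\in H^2(\D^2)$.
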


\begin{proof}
Using the fact that $\theta$ induces an isometric multiplication operator on $H^2(\D^2)$, it is elementary to check, for all $j,k \ge0$, that
\[
\langle \chi_j f, \chi_k f \rangle = \langle \chi_j \tilde{f}, \chi_k \tilde{f} \rangle. 
\]
Invoking Lemma \ref{opt-equiv} then gives the result.
\end{proof}

We can say something similar for other types of factorizations. Let us first establish some notation. 

\begin{definition}[Reflection]
A two-variable polynomial has bi-degree $(n, m)$ if it is degree $n$ in $z$ and degree $m$ in $w$. For such a polynomial, we define the \textit{reflection} of $p$ as 
\[
\overleftarrow{p}(z, w) := z^n w^m \overline{p\left(\frac{1}{\overline{z}}, \frac{1}{\overline{w}}\right)}.
\]
\end{definition}
Notice that, as an operator, this reflection is a densely defined isometry on $H^2(\D^2)$; it is elementary to check that $\|\overleftarrow{p} \| = \| p \|$ for any polynomial $p$.

\begin{theorem}\label{hound}
Suppose $f \in H^2(\D^2)$ with $f(0) \neq 0$. For any factorization of the form
\[
f(z,w) = q(z,w) \tilde{f}(z,w),
\] 
with $q$ a polynomial such that $\overleftarrow{q}(0)\neq 0$, and any $n\ge0$, there exists a constant $c$ so that 
\[
p_n^*[f] = c \cdot p_n^*[ \overleftarrow{q} \tilde{f}].
\]
\end{theorem}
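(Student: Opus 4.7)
The plan is to reduce this to a direct application of Lemma \ref{opt-equiv}. That lemma says that if $\langle \chi_j f, \chi_k f \rangle = \langle \chi_j g, \chi_k g \rangle$ for all $0 \le j,k \le n$, then the OPAs of degree $n$ agree up to a multiplicative constant, provided both functions are nonvanishing at the origin. So my first move will be to verify the hypotheses of Lemma \ref{opt-equiv} with $g = \overleftarrow{q}\tilde{f}$. Since $f(0) = q(0)\tilde{f}(0) \neq 0$ forces $\tilde{f}(0) \neq 0$, combining with the assumption $\overleftarrow{q}(0) \neq 0$ gives $(\overleftarrow{q}\tilde{f})(0) \neq 0$, and $\overleftarrow{q}\tilde{f}$ lies in $H^2(\D^2)$ because multiplication by a polynomial preserves $H^2$.

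The heart of the proof is the identity $|q| = |\overleftarrow{q}|$ on $\T^2$. This I will verify directly from the definition: if $q$ has bi-degree $(n,m)$, then for $(z,w) \in \T^2$ we have $1/\overline{z} = z$ and $1/\overline{w} = w$, so
\[
\overleftarrow{q}(z,w) = z^n w^m \,\overline{q(z,w)},
\]
whence $|\overleftarrow{q}(z,w)|^2 = |q(z,w)|^2$ pointwise on $\T^2$.

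With this identity in hand, I will compute the Gram entries using the $L^2(\T^2, dm)$ representation of the inner product. For each $0 \le j,k \le n$,
\[
\langle \chi_j f, \chi_k f \rangle = \int_{\T^2} \chi_j \overline{\chi_k}\, |q|^2 |\tilde{f}|^2 \, dm = \int_{\T^2} \chi_j \overline{\chi_k}\, |\overleftarrow{q}|^2 |\tilde{f}|^2 \, dm = \langle \chi_j \overleftarrow{q}\tilde{f}, \chi_k \overleftarrow{q}\tilde{f} \rangle,
\]
where I have used that $f = q\tilde{f}$ and $\overleftarrow{q}\tilde{f}$ both have boundary values in $L^2(\T^2)$. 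Applying Lemma \ref{opt-equiv} then yields a constant $c$ with $p_n^*[f] = c \cdot p_n^*[\overleftarrow{q}\tilde{f}]$, as desired.

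There is no real obstacle beyond being careful about the bookkeeping: the argument is short because the reflection is engineered so that $|q|$ and $|\overleftarrow{q}|$ agree on the distinguished boundary, and the Gram matrix governing the optimal system depends on $q$ only through $|q|^2$ on $\T^2$. The assumption $\overleftarrow{q}(0) \neq 0$ is exactly what is needed to ensure that $\overleftarrow{q}\tilde{f}$ does not vanish at the origin, so that the right-hand OPA is itself nontrivial and Lemma \ref{opt-equiv} applies.
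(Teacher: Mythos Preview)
Your proof is correct and follows essentially the same route as the paper: you establish $|q|^2 = |\overleftarrow{q}|^2$ on $\T^2$, use this to match the Gram entries $\langle \chi_j f, \chi_k f\rangle = \langle \chi_j \overleftarrow{q}\tilde{f}, \chi_k \overleftarrow{q}\tilde{f}\rangle$, and then invoke Lemma~\ref{opt-equiv}. If anything, you are slightly more explicit than the paper in checking that $(\overleftarrow{q}\tilde{f})(0)\neq 0$ and that $\overleftarrow{q}\tilde{f}\in H^2(\D^2)$.
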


\begin{proof}
It suffices to notice that for any polynomial $q$, we have $|q|^2 = |\overleftarrow{q}|^2$ on $\T^2$. Thus, for any $0\le j,k \le n$, we have
\begin{align*}
\langle \chi_j f, \chi_k f \rangle &= \langle \chi_j q \tilde{f}, \chi_k q \tilde{f} \rangle\\
&= \langle \chi_j \overleftarrow{q} \tilde{f}, \chi_k \overleftarrow{q} \tilde{f} \rangle.
\end{align*}
So long as $\overleftarrow{q}(0) \neq 0$, we may invoke Lemma \ref{opt-equiv} to conclude the result. 
\end{proof}

\begin{remark}
Heuristically, this theorem says that the OPAs to $1/f$ do not care about the location of certain polynomial factors of $f$. This could be, perhaps, strong motivation to search for a counterexample to the Weak Shanks Conjecture.
In particular,  Theorem \ref{hound} implies that if $f$ is a polynomial so that $\overleftarrow{f}(0) \neq 0$ (i.e., if $f$ is of degree $(n, m)$, then the coefficient of $z^nw^m$ in $f$ is non-zero) , then 
\[
p_n^*[f] = p_n^*[\overleftarrow{f}].
\]
The function $f$ could have no zeros in the closed bidisk, while $\overleftarrow{f}$ has a plethora of zeros inside $\mathbb{D}^2$(!). 

This observation could also be of use when considering known counterexamples to the Strong Shanks Conjecture. For example, what can be said of the counterexamples of Genin and Kamp \cite{1454852} with respect to factorizations? Can their algorithm be used to construct a function with OPAs vanishing in $\D^2$, but for which the function can be factored as the product of two polynomials, one of which vanishes only in the bidisk and the other which vanishes only outside the closed bidisk? If so, the above observation could be used to produce a counterexample to the Weak Shanks Conjecture. 
\end{remark}

We will end this section with an observation about constant OPAs. 

\begin{prop}
Let $f, g, h \in H^2(\D^2)$, each non-zero at the origin, and with $g(z, w) = g(z)$ and $h(z,w) = h(w)$. If $f = gh$, then $p_0^*[f] = p_0^*[g] \cdot p_0^*[h]$. 
\end{prop}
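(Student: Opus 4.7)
The plan is to compute $p_0^*[f]$ directly from the definition and show that it factors thanks to a Fubini-type identity made available by the product structure $f = g(z)h(w)$.

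First, I would recall that for any $\phi \in H^2(\D^2)$ with $\phi(0)\neq 0$, the $0$-th OPA is a constant, obtained from the $1\times 1$ optimal system; explicitly,
\[
p_0^*[\phi] = \frac{\overline{\phi(0)}}{\|\phi\|^2}.
\]
Applying this to $f$, $g$, $h$ (each viewed as an element of $H^2(\D^2)$ and non-vanishing at the origin by hypothesis), the desired identity $p_0^*[f]=p_0^*[g]\cdot p_0^*[h]$ reduces to the two scalar identities
\[
\overline{f(0)} = \overline{g(0)}\,\overline{h(0)} \qquad \text{and} \qquad \|f\|^2 = \|g\|^2 \, \|h\|^2.
\]
The first is immediate from $f = gh$ and the fact that pointwise evaluation at $0$ is multiplicative.

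For the second identity, I would invoke the expression of the inner product as an integral over $\T^2$ with respect to normalized Lebesgue measure. Because $g$ depends only on $z$ and $h$ only on $w$, Fubini's theorem gives
\[
\|gh\|_{H^2(\D^2)}^2 = \int_{\T^2} |g(z)|^2 |h(w)|^2 \, dm(z,w) = \int_{\T} |g(z)|^2 \, dm(z) \int_{\T} |h(w)|^2 \, dm(w),
\]
and each single-variable integral agrees with $\|g\|^2$ and $\|h\|^2$ computed in $H^2(\D^2)$ (since integrating $|g(z)|^2$ against the product measure simply reproduces the one-variable integral). Substituting yields $\|f\|^2 = \|g\|^2 \|h\|^2$, and combining with the multiplicativity of evaluation at the origin finishes the proof.

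There is no substantive obstacle here; the argument is essentially a bookkeeping exercise combining the closed form of the $0$-th OPA with the separation of variables afforded by the hypothesis on $g$ and $h$. The only point that deserves a brief comment in the write-up is that norms of one-variable functions viewed in $H^2(\D^2)$ coincide with their norms in $H^2(\D)$, which follows at once from the product structure of $dm$ on $\T^2$.
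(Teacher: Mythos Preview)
Your proposal is correct and follows essentially the same approach as the paper: both reduce to the closed form $p_0^*[\phi]=\overline{\phi(0)}/\|\phi\|^2$ and then verify $\|gh\|^2=\|g\|^2\|h\|^2$. The only cosmetic difference is that the paper establishes the norm identity by expanding in Taylor coefficients and using orthogonality of the monomials $z^j w^k$, whereas you obtain it from Fubini on $\T^2$; these are equivalent elementary computations.
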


\begin{proof}
Recall that
\[
p_0^*[f] = \frac{\overline{f(0)}}{\|f\|^2}.
\]
Since $\overline{f(0)} = \overline{g(0)h(0)}$, it suffices to show that $\|f\|^2 = \|g\|^2\|h\|^2$. Put $g(z) = \sum_{j \ge 0}a_j z^j$ and $h(w) = \sum_{k \ge 0}b_k w^k$. Then
\begin{align*}
\|f\|^2 &= \left\| \left(\sum_{j \ge 0}a_j z^j \right) \left(\sum_{k \ge 0}b_k w^k \right)   \right\|^2\\
&= \left\|  \sum_{n \ge 0} \sum_{\ell\ge 0}a_\ell z^\ell b_{n-\ell} w^{n-\ell}  \right\|^2 \\
&= \sum_{n=0}^\infty \sum_{\ell=0}^n \left| a_\ell b_{n-\ell} \right|^2\\
&= \left(\sum_{j \ge 0} |a_j|^2\right) \left(\sum_{k \ge 0} |b_k|^2\right)\\
&= \|g\|^2 \|h\|^2.
\end{align*}
\end{proof}

Notice that the third equality in the above proof holds only because $g$ and $h$ are functions of \textit{different} variables. 
This begs a natural question.
\begin{question}\label{factor}
Are there any circumstances where the OPAs to $1/f = 1/gh$ factor as the OPAs to $1/g$ and $1/h$?
\end{question}

If this question has an affirmative answer, this may be a tool for producing a counterexample to the Weak Shanks Conjecture, especially if the factorization of $f$ might hold with two one-variable factors. In this case, some traction might be gained, since the varieties at hand would be much easier to understand.


\section{Shanks-Type Results}\label{shanks-theorems}

We begin by providing a class of functions for which a Shanks-type theorem holds (c.f. Question \ref{non-zero}), but first need a lemma.

\begin{lemma}\label{perp-factor}
Let $f \in H^2(\D^2)$ with $f(0)\neq 0$. If $p_n^*[f]$ can be expressed as
\[
p_n^*[f](z,w) = q(z,w) + r(z,w), \  \ \ q,r \in \mathcal{P}_n
\]
with $r(0) = 0$ and $qf \perp rf$, then $r \equiv 0$ and $p_n^*[f](z,w) = q(z,w)$. 
\end{lemma}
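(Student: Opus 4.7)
The plan is to exploit the defining orthogonality property of the OPA: the product $p_n^*[f]\,f$ is the orthogonal projection of $1$ onto $\mathcal{P}_n f$, so the residual $1 - p_n^*[f]\,f$ is orthogonal to every element of $\mathcal{P}_n f$. Since $r \in \mathcal{P}_n$ by hypothesis, $rf$ is a legitimate test vector, and I would plug it into this orthogonality relation to extract information about $r$.

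First I would write down
\[
0 = \langle 1 - p_n^*[f]\,f,\ rf \rangle = \langle 1, rf \rangle - \langle p_n^*[f]\,f, rf \rangle
\]
and handle the two terms separately. For the first term, since $1 = k_0$ is the reproducing kernel at the origin in $H^2(\D^2)$, we have $\langle 1, rf \rangle = \overline{(rf)(0)} = \overline{r(0)\,f(0)}$, which vanishes by the assumption $r(0) = 0$. For the second term, I would substitute the decomposition $p_n^*[f] = q + r$ and use the perpendicularity hypothesis $qf \perp rf$ to collapse
\[
\langle p_n^*[f]\,f, rf \rangle = \langle qf, rf \rangle + \langle rf, rf \rangle = \|rf\|^2.
\]

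Combining these two computations yields $\|rf\|^2 = 0$, so $rf \equiv 0$ as an element of $H^2(\D^2)$. Because $f(0) \neq 0$, the function $f$ is not identically zero, and the ring of holomorphic functions on $\D^2$ is an integral domain; hence $r \equiv 0$, and the decomposition collapses to $p_n^*[f] = q$.

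I do not anticipate any real obstacle here — the lemma is essentially a direct unpacking of the variational characterization of $p_n^*[f]$, and the two hypotheses ($r(0)=0$ and $qf \perp rf$) are precisely what is needed to kill the two terms produced by the orthogonality relation. The only mild subtlety is remembering that $\langle 1, g \rangle = \overline{g(0)}$ in $H^2(\D^2)$, which was already used in the proof of Proposition \ref{p-n-factor}.
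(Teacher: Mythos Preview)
Your proof is correct. The paper takes a slightly different but equivalent route: rather than testing the residual $1 - p_n^*[f]\,f$ against $rf$ via the projection orthogonality, it expands
\[
\|p_n^*[f]\,f - 1\|^2 = \|(q+r)f - 1\|^2 = \|qf - 1\|^2 + \|rf\|^2
\]
(using the same two cancellations $\langle qf, rf\rangle = 0$ and $\langle rf, 1\rangle = 0$) and then invokes the minimality and uniqueness of $p_n^*[f]$ among competitors in $\Pol_n$ to force $p_n^*[f] = q$. Your argument is arguably more direct, isolating $\|rf\|^2 = 0$ in a single line without the norm expansion, and it also makes explicit the final step $rf = 0 \Rightarrow r = 0$ (via $f\not\equiv 0$ and the integral domain property), which the paper leaves implicit. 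The two approaches are dual views of the same projection --- variational versus orthogonality --- and rest on identical cancellations, so the difference is one of packaging rather than substance.
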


\begin{proof}
Observe:
\begin{align*}
    \|p_n^*[f]f - 1\|^2 &= \|(q+r)f - 1\|^2\\
    &= \|(q+r)f\|^2 - 2\operatorname{Re}\{ \langle (q+r)f, 1\rangle \} + 1\\
    &= \|qf\|^2 + 2\operatorname{Re}\{ \underbrace{\langle qf, rf\rangle}_{=0} \} + \|rf\|^2 - 2\operatorname{Re}\{ \langle qf, 1\rangle + \underbrace{\langle rf, 1\rangle}_{=0} \} + 1\\
    &= \|qf-1\|^2 + \|rf\|^2 \\
    &\ge \|qf-1\|^2.
\end{align*}
By definition, this tells us that $p_n^*[f] = q$. 
\end{proof}

\begin{theorem}\label{disguise}
Let $h \in H^2(\D)$. If $f(z,w)= h(\chi_k)$ for some $k\ge0$, then, for each $n\ge 0$, $p_n^*[f]$ is zero-free in the closed bidisk.  
\end{theorem}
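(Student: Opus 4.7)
My plan is to show that when $f = h(\chi_k)$ the OPA $p_n^*[f]$ is itself a polynomial in the single variable $\chi_k$, equal to $p_M^*[h](\chi_k)$ for an appropriate $M$, and then to apply the one-variable Theorem \ref{OPA-bnd}. The starting point is that, for $k \geq 1$ (the case $k=0$ makes $f$ a nonzero constant and is trivial), the monomial $\chi_k = z^a w^b$ with $(a,b) \neq (0,0)$ satisfies $|\chi_k| = 1$ on $\T^2$. Hence the substitution map $T \colon H^2(\D) \to H^2(\D^2)$ given by $(Tg)(z,w) := g(\chi_k(z,w))$ is an isometry, and its image $\mathcal{S}$ contains both $f = Th$ and $1 = T(1)$.

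The key algebraic observation is that any $\chi_j$ that is \emph{not} a non-negative power of $\chi_k$ must satisfy $\chi_j f \perp \mathcal{S}$. This reduces to a short calculation in the monomial basis: an exponent vector of the form $(a_j, b_j) + \ell(a,b)$ (with $\ell \geq 0$) can equal $m(a,b)$ only if $(a_j, b_j) = (m-\ell)(a,b)$, and since all exponents are non-negative and $(a,b)\neq(0,0)$ this forces $\chi_j$ itself to be a power of $\chi_k$. I expect this orthogonality to be the main technical point of the argument; everything afterward is bookkeeping.

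With this in hand, split $p_n^*[f] = q + r$, where $q$ collects the terms $a_j \chi_j$ with $\chi_j$ a power of $\chi_k$ and $r$ collects the rest. Then $qf \in \mathcal{S}$ while $rf \perp \mathcal{S}$, so in particular $qf \perp rf$; moreover $r(0) = 0$ because $\chi_0 = 1 = \chi_k^0$ is grouped into $q$. Lemma \ref{perp-factor} then forces $r \equiv 0$, so $p_n^*[f] = Q(\chi_k)$ for a one-variable polynomial $Q$ of degree at most $M := \max\{m \geq 0 : \chi_k^m \in \Pol_n\}$.

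Finally, $T$ restricts to an isometric isomorphism from $\Pol_M h = \operatorname{span}\{h, \zeta h, \ldots, \zeta^M h\}$ onto $\operatorname{span}\{f, \chi_k f, \ldots, \chi_k^M f\}$ and sends $1$ to $1$, so transporting orthogonal projections through $T$ identifies $Q$ with the one-variable OPA $p_M^*[h]$ to $1/h$. Theorem \ref{OPA-bnd} gives that $p_M^*[h]$ has no zeros in $\overline{\D}$, and since $\chi_k$ maps $\overline{\D^2}$ into $\overline{\D}$, the composition $p_n^*[f](z,w) = p_M^*[h](\chi_k(z,w))$ has no zeros on $\overline{\D^2}$, as desired.
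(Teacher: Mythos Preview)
Your proof is correct and follows essentially the same strategy as the paper: split $p_n^*[f]$ into a polynomial in $\chi_k$ plus a remainder $r$ with $r(0)=0$, verify $qf \perp rf$, apply Lemma~\ref{perp-factor} to kill $r$, and then reduce to the one-variable result. The differences are only in presentation: you justify the orthogonality $qf \perp rf$ explicitly via the isometry $T$ and the exponent-vector argument (the paper simply asserts it), and for the final step you identify $Q$ with $p_M^*[h]$ and cite Theorem~\ref{OPA-bnd} directly, whereas the paper factors $q(\chi_k) = c\prod_j(\chi_k - \alpha_j)$ and reruns the Cauchy--Schwarz argument of Theorem~\ref{OPA-bnd} with $\chi_k$ in place of $z$.
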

Note that $f(z,w)= h(\chi_k)$ is an element of $H^2(\D^2)$; the coefficients are square-summable.
\begin{proof}
Begin by collecting all terms of $p_n^*[f]$ which are a power of $\chi_k$, including the constant term; call the sum of these terms $q(\chi_k)$. Now, put
\[
p_n^*[f](z,w)  = q(\chi_k) + r(z,w),
\]
and note that $r(0) = 0$ by construction.
Notice that $\langle qf, rf \rangle = 0$, so by Lemma \ref{perp-factor}, we can conclude that $p_n^*[f](z,w) = q(\chi_k)$. Now, we can factor $p_n^*[f]$ as 
\[
p_n^*[f] = c \prod_{j=1}^M \left(\chi_k - \alpha_j\right),
\]
for some constant $c$ and $M$ a positive integer. 
Putting $\chi_k = z^\ell w^m$ and arguing in the same manner as in the proof of Theorem \ref{OPA-bnd}, we see that $p_n^*[f]$ vanishes only when $|z^\ell w^m| > 1$. However, if $(z, w) \in \overline{\D}^2$, then $|z^\ell w^m| \le 1$. From this, we conclude that $p_n^*[f]$ is zero-free in the closed bidisk.
\end{proof}

\begin{remark}
This theorem tells us that there are plenty of functions for which a Shanks-type theorem holds. On the other hand, it also tells us that if searching for a counterexample to the Weak Shanks Conjecture, one must work in ``true'' polynomials of two variables; it will not suffice to consider a polynomial of one variable, or a one-variable polynomial ``disguised" in two variables.
\end{remark}

We now give an equivalent formulation for a function with no zeros in the bidisk to have OPAs with no zeros in the bidisk. 

\begin{theorem}\label{det-form}
Let $f \in H^2(\D^2)$ be zero-free in the bidisk. Then the OPA $p_n^*[f]$ has no zeros in the bidisk if and only if, for every $\alpha \in \D^2$, the matrix
\[
\begin{bmatrix}
\langle k_\alpha, f \rangle & \langle f, \chi_1 f\rangle & \dots & \langle f, \chi_n f\rangle\\
\langle k_\alpha, \chi_1 f \rangle & \langle \chi_1 f, \chi_1 f\rangle & \dots & \langle \chi_1 f, \chi_n f\rangle\\
\vdots & \vdots & \ddots & \vdots \\
\langle k_\alpha, \chi_n f \rangle & \langle \chi_n f, \chi_1 f\rangle & \dots & \langle \chi_n f, \chi_n f\rangle
\end{bmatrix}
\]
is invertible.
\end{theorem}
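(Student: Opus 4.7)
My plan is to produce an explicit determinantal formula for $p_n^*[f](\alpha)$ via Cramer's rule, and then show that the matrix $M_\alpha$ displayed in the theorem has determinant that differs from $p_n^*[f](\alpha)$ only by a scalar that is nonvanishing on $\D^2$ under the hypotheses. Throughout, let $G := (\langle \chi_j f, \chi_k f\rangle)_{0 \le j,k \le n}$, which is positive-definite (hence $\det G > 0$) since $\{\chi_j f\}_{j=0}^n$ are linearly independent in $H^2(\D^2)$ when $f\not\equiv 0$.

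First I would apply Cramer's rule to the optimal system $G\vec{a} = \overline{f(0)}\, e_0$. Since only the top entry of the right-hand side is nonzero, expanding the Cramer numerator along column $j$ isolates the $(0,j)$-minor and gives
\[
a_j = \frac{(-1)^j\, \overline{f(0)}\, \det G^{[0,j]}}{\det G},
\]
where $G^{[0,j]}$ denotes $G$ with row $0$ and column $j$ removed. Summing $a_j \chi_j(\alpha)$ and recognizing the resulting alternating sum as a cofactor expansion along the first row, I obtain $p_n^*[f](\alpha) = \tfrac{\overline{f(0)}}{\det G} \det R_\alpha$, where $R_\alpha$ is $G$ with its top row replaced by the row $(\chi_0(\alpha),\ldots,\chi_n(\alpha))$.

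Next I would expand $\det M_\alpha$ along its first column. Using that $\langle k_\alpha, \chi_j f\rangle = \overline{\chi_j(\alpha) f(\alpha)}$ and that the $(j,0)$-minor of $M_\alpha$ is just the submatrix $G^{[j,0]}$ of $G$, I get
\[
\det M_\alpha = \overline{f(\alpha)} \sum_{j=0}^n (-1)^j\, \overline{\chi_j(\alpha)}\, \det G^{[j,0]}.
\]
The main bookkeeping step is to use the Hermitian property of $G$ to see that $G^{[0,j]}$ is the conjugate transpose of $G^{[j,0]}$, so $\det G^{[j,0]} = \overline{\det G^{[0,j]}}$. Pulling the conjugate outside the sum identifies it with $\overline{\det R_\alpha} = \tfrac{\det G}{f(0)}\, \overline{p_n^*[f](\alpha)}$ (using also that $\det G$ is real). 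This yields the clean relation
\[
\det M_\alpha = \frac{\overline{f(\alpha)}\, \det G}{f(0)}\, \overline{p_n^*[f](\alpha)}.
\]

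The hypotheses $f(0)\neq 0$ and $f$ zero-free on $\D^2$ ensure that all three non-$p_n^*[f]$ factors on the right are nonzero for every $\alpha \in \D^2$, so $\det M_\alpha = 0$ if and only if $p_n^*[f](\alpha) = 0$, giving the theorem. I expect the trickiest step to be the index/conjugation bookkeeping relating the two minors $G^{[0,j]}$ and $G^{[j,0]}$ via Hermitianness; once that identity is in hand, the rest is a routine Cramer-style determinantal identity combined with the zero-free assumption on $f$ in the bidisk.
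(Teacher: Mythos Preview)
Your proof is correct. You and the paper arrive at the same conclusion by two different (but dual) linear-algebra routes.

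The paper does \emph{not} apply Cramer's rule to the optimal system for $p_n^*[f]$. Instead it introduces the auxiliary polynomial $q_\alpha$ defined by $q_\alpha f = \Pi_{\Pol_n f} k_\alpha$, observes the reciprocity
\[
p_n^*[f](\alpha)\,f(\alpha) \;=\; \langle p_n^*[f]f,\, k_\alpha\rangle \;=\; \langle 1,\, q_\alpha f\rangle \;=\; \overline{q_\alpha(0)\,f(0)},
\]
and then applies Cramer's rule to the linear system for $q_\alpha$. In that system the right-hand side is the column $(\langle k_\alpha,\chi_j f\rangle)_j$, so the matrix $M_\alpha$ in the statement appears \emph{directly} as the Cramer numerator for the constant term $b_0$ of $q_\alpha$; no minor-juggling is needed. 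You instead stay with the primal system for $p_n^*[f]$, get the row-replacement determinant $R_\alpha$, and then use the Hermitian symmetry $\det G^{[j,0]}=\overline{\det G^{[0,j]}}$ to turn $R_\alpha$ into (the conjugate of) the column-replacement determinant $M_\alpha$.

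What each buys: the paper's dual-projection trick sidesteps your conjugation/minor bookkeeping entirely, at the cost of introducing $q_\alpha$. Your approach avoids the auxiliary projection and, as a bonus, produces the explicit scalar identity
\[
\det M_\alpha \;=\; \frac{\overline{f(\alpha)}\,\det G}{f(0)}\,\overline{p_n^*[f](\alpha)},
\]
which is slightly more informative than the paper's ``$b_0=0$ iff $\det M_\alpha=0$''. Both arguments use the zero-free hypothesis on $f$ in exactly the same place.
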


\begin{proof}
For $\alpha \in \D^2$, let $q_\alpha f$ be the orthogonal projection of the kernel $k_\alpha$ onto $\Pol_n f$. Notice that 
\[
p_n^*[f](\alpha)f(\alpha) = \langle p_n^*[f]f, q_\alpha f \rangle = \overline{q_\alpha(0)f(0)}.
\]
In turn, $p_n^*[f]$ vanishes at $\alpha$ if and only if $q_\alpha$ vanishes at the origin. This occurs if and only if the constant term in $q_\alpha$ is zero. Putting $q_\alpha = \sum_{j=0}^n b_j\chi_j$, routine linear algebra (Cramer's rule) gives
\[
b_0 = \frac{\det G_0}{\det G},
\]
where $G$ is the Gram matrix  $(\langle \chi_j f, \chi_k f \rangle)_{0\le j,k \le n}$ and $G_0$ is formed by replacing the first column of $G$ by 
\[
[\langle k_\alpha, f \rangle, \langle k_\alpha, \chi_1 f \rangle, \ldots, \langle k_\alpha, \chi_n f \rangle]^T. 
\]
Thus, we have $b_0$ is zero if and only if 
\[
\det G_0 = \left|
\begin{matrix}
\langle k_\alpha, f \rangle & \langle f, \chi_1 f\rangle & \dots & \langle f, \chi_n f\rangle\\
\langle k_\alpha, \chi_1 f \rangle & \langle \chi_1 f, \chi_1 f\rangle & \dots & \langle \chi_1 f, \chi_n f\rangle\\
\vdots & \vdots & \ddots & \vdots \\
\langle k_\alpha, \chi_n f \rangle & \langle \chi_n f, \chi_1 f\rangle & \dots & \langle \chi_n f, \chi_n f\rangle
\end{matrix}\right|
=0.
\]
The result then follows. 
\end{proof}

\begin{remark}
We know, from Proposition \ref{p-n-factor}, that if we are to search for a counterexample to the Weak Shanks Conjecture, we must consider the case when the OPA has degree $n\ge 2$. In the case $n=2$, we have considered various implications of Theorem \ref{det-form}. However, even in this simplest case, the analysis is non-trivial and it is unclear how to relate the determinantal expression to the zero set of the function $f$. This difficulty demonstrates the crux of the Weak Shanks Conjecture.
Nonetheless, Theorem \ref{det-form} could potentially be a powerful tool for more effectively exploring the conjecture.
\end{remark}


\section{Weakly Inner Functions}\label{w-inner}
A function $f \in H^2(\D^2)$ is said to be \textit{inner} in $H^2(\D^2)$ if 
\[
|f|= 1 \ \ \text{a.e. on $\T^2$},
\]
and \textit{weakly-inner} if 
\[
\langle f, \chi_k f \rangle = 0 \ \ \text{for all $k\ge1$}.
\]
Notice that every weakly inner function is inner. 

Theorem \ref{det-form} gives an immediate corollary for certain weakly inner functions (however, we will shortly see that this result is trivialized by a result of Sargent and Sola \cite{MR4410997}). 
\begin{cor}\label{sing-int}
Let $f \in H^2(\D^2)$ be zero-free in the bidisk. If $f$ is weakly-inner, then the OPA $p_n^*[f]$ has no zeros in the open bidisk. \end{cor}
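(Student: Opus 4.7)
The plan is to apply Theorem \ref{det-form} directly and exploit the definition of weakly inner to massively simplify the determinantal condition. Since $f$ is zero-free in the bidisk, the hypotheses of Theorem \ref{det-form} are in place, so it suffices to show that for every $\alpha \in \D^2$ the matrix
\[
M_\alpha := \begin{bmatrix}
\langle k_\alpha, f \rangle & \langle f, \chi_1 f\rangle & \dots & \langle f, \chi_n f\rangle\\
\langle k_\alpha, \chi_1 f \rangle & \langle \chi_1 f, \chi_1 f\rangle & \dots & \langle \chi_1 f, \chi_n f\rangle\\
\vdots & \vdots & \ddots & \vdots \\
\langle k_\alpha, \chi_n f \rangle & \langle \chi_n f, \chi_1 f\rangle & \dots & \langle \chi_n f, \chi_n f\rangle
\end{bmatrix}
\]
is invertible.

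The key observation is that weak innerness gives $\langle f, \chi_k f\rangle = 0$ for every $k\ge 1$, which kills every off-diagonal entry in the top row of $M_\alpha$. Thus the first row collapses to $[\langle k_\alpha, f\rangle, 0, \ldots, 0]$, and a cofactor expansion along this row yields
\[
\det M_\alpha = \langle k_\alpha, f \rangle \cdot \det G,
\]
where $G = (\langle \chi_j f, \chi_k f\rangle)_{1 \le j,k \le n}$ is the Gram matrix of $\chi_1 f, \ldots, \chi_n f$.

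It then remains to argue that both factors are nonzero. The reproducing property gives $\langle k_\alpha, f\rangle = \overline{f(\alpha)}$, which is nonzero precisely because $f$ is zero-free on $\D^2$. For the second factor, $G$ is the Gram matrix of the vectors $\chi_1 f, \ldots, \chi_n f$, which are linearly independent in $H^2(\D^2)$: the monomials $\chi_1, \ldots, \chi_n$ are linearly independent, and multiplication by the nonzero analytic function $f$ is injective on polynomials (the ring of holomorphic functions on $\D^2$ being an integral domain). Hence $\det G \neq 0$, so $\det M_\alpha \neq 0$ for every $\alpha \in \D^2$, and Theorem \ref{det-form} delivers the conclusion.

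Honestly, there is no real obstacle here: the weakly inner hypothesis is tailor-made to sparsify the first row of the matrix in Theorem \ref{det-form}, and the rest is a cofactor expansion together with two essentially automatic nonvanishing checks. This is precisely why the authors flag that Corollary \ref{sing-int} is trivialized by the Sargent--Sola result they cite—any real content lies in that broader statement, not in the direct route taken here.
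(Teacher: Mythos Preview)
Your proof is correct and follows essentially the same route as the paper's: use weak innerness to zero out the first row of the matrix in Theorem~\ref{det-form} except for the entry $\langle k_\alpha, f\rangle = \overline{f(\alpha)}$, expand along that row, and observe that both $\overline{f(\alpha)}$ and the remaining Gram determinant are nonzero. If anything, you supply a bit more justification for the linear independence of $\chi_1 f,\ldots,\chi_n f$ than the paper does.
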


\begin{proof}
If $f$ is weakly inner, then for any $\alpha \in \D^2$, we have 
\begin{align*}
& \det 
\begin{bmatrix}
\langle k_\alpha, f \rangle & \langle f, \chi_1 f\rangle & \dots & \langle f, \chi_n f\rangle\\
\langle k_\alpha, \chi_1 f \rangle & \langle \chi_1 f, \chi_1 f\rangle & \dots & \langle \chi_1 f, \chi_n f\rangle\\
\vdots & \vdots & \ddots & \vdots \\
\langle k_\alpha, \chi_n f \rangle & \langle \chi_n f, \chi_1 f\rangle & \dots & \langle \chi_n f, \chi_n f\rangle
\end{bmatrix}\\
= &
\det
\begin{bmatrix}
\overline{f(\alpha)} & 0 & \dots &0\\
\langle k_\alpha, \chi_1 f \rangle & \langle \chi_1 f, \chi_1 f\rangle & \dots & \langle \chi_1 f, \chi_n f\rangle\\
\vdots & \vdots & \ddots & \vdots \\ 
\langle k_\alpha, \chi_n f \rangle & \langle \chi_n f, \chi_1 f\rangle & \dots & \langle \chi_n f, \chi_n f\rangle
\end{bmatrix}\\[.5cm]
&= \overline{f(\alpha)}\det(\langle \chi_j f, \chi_k f \rangle)_{1\le j,k \le n}). 
\end{align*}
As $\overline{f(\alpha)}$ is non-zero (by hypothesis) and the Gram matrix $(\langle \chi_j f, \chi_k f \rangle)_{1\le j,k \le n})$ is of full rank, the above determinant must be non-zero. The result then follows from Theorem \ref{sing-int}.
\end{proof}

Although this result may seem interesting, as it could be interpreted as Shanks-type theorem for ``singular" inner functions on the bididsk, it is really just a statement about constants: 

\begin{prop}{\cite[Proposition~7]{MR4410997}}
If $f \in H^2(\D^2)$ is weakly inner then 
\[
p_n^*[f] = p_0^*[f] \ \ \text{for all $n \ge 0$}.
\]
That is, weakly inner functions have OPAs which are all constant. 
\end{prop}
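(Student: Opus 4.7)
The plan is to exhibit $p_0^*[f]$ directly as a candidate for $p_n^*[f]$ and verify the orthogonality that characterizes the OPA. Set $c := p_0^*[f] = \overline{f(0)}/\|f\|^2$. By definition of the OPA, $p_n^*[f]f$ is the orthogonal projection of $1$ onto $\Pol_n f$, so by uniqueness of this projection it suffices to show that $cf \in \Pol_n f$ (which is obvious since $c \in \Pol_0 \subset \Pol_n$) and that
\[
\langle 1 - cf, \chi_k f \rangle = 0 \qquad \text{for every } 0 \le k \le n.
\]

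For the case $k = 0$, direct computation gives $\langle 1 - cf, f \rangle = \langle 1, f \rangle - c\|f\|^2 = \overline{f(0)} - c\|f\|^2 = 0$ by the choice of $c$. For $k \ge 1$, I would split the inner product as
\[
\langle 1 - cf, \chi_k f \rangle = \langle 1, \chi_k f \rangle - c\langle f, \chi_k f \rangle.
\]
The first term equals the value of $\chi_k f$ at the origin (using that $1 = k_0$ is the reproducing kernel at $0$), and this value is $\chi_k(0) f(0) = 0$ because $\chi_k$ vanishes at the origin for $k \ge 1$. The second term vanishes by the weakly inner hypothesis $\langle f, \chi_k f \rangle = 0$ for $k \ge 1$.

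Both required conditions hold, so $cf$ is the orthogonal projection of $1$ onto $\Pol_n f$, whence $p_n^*[f] = c = p_0^*[f]$. There is no real obstacle here: the argument is essentially a direct verification, with the weakly inner condition doing all of the work of collapsing the higher-order orthogonality relations to triviality.
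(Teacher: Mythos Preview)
Your proof is correct. The verification that $1 - cf \perp \chi_k f$ for all $0 \le k \le n$ is exactly what is needed to identify $cf$ as the orthogonal projection of $1$ onto $\Pol_n f$, and the weakly inner hypothesis dispatches the cases $k \ge 1$ immediately. (One tiny imprecision: $\langle 1, \chi_k f\rangle = \overline{(\chi_k f)(0)}$, with a conjugate, but since this is zero the conclusion is unaffected.)

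As for comparison: the paper does not actually prove this proposition; it is quoted from Sargent--Sola \cite{MR4410997} and stated without argument. That said, the paper's own Lemma~\ref{perp-factor} furnishes an alternative route worth noting: writing $p_n^*[f] = c + r$ with $c$ the constant term and $r(0)=0$, the weakly inner condition gives $\langle cf, rf\rangle = c\sum_{k\ge 1}\overline{a_k}\langle f,\chi_k f\rangle = 0$, so Lemma~\ref{perp-factor} forces $r\equiv 0$; the resulting constant must then be $p_0^*[f]$ since a projection onto $\Pol_n f$ that lands in $\Pol_0 f$ is automatically the projection onto $\Pol_0 f$. Your direct approach is cleaner, since it avoids first computing $p_n^*[f]$ abstractly and instead produces the answer and checks it.
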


We note that this result can be strengthened to an \textit{if and only} statement; it follows from the same approach in \cite[Lemma~3.7]{Felder}. We include Corollary \ref{sing-int} in order to stress that one must be careful about hypothesis when looking for Shanks-type results.

There is a related result which deals with functions which satisfy $\langle f, \chi_k f \rangle = 0$ for $k$ in some subset of natural numbers (such functions will not always be constants). Before stating this result, we need a definition.

\begin{definition}
For each $k, n \ge 0$, let $\pounds(k,n)$ be the index $j$ so that $(\chi_k)^n = \chi_j$. 
\end{definition}
For example, we have $\pounds(0, n) = 0$ for all $n\ge 0$ (since $\chi_0^n = 1^n = \chi_0$), $\pounds(1, 2) = 3$ (since $\chi_1^2 = z^2 = \chi_3$), and $\pounds(2, 2) = 5$ (since $\chi_2^2 = w^2 = \chi_5$). 

Using this notation, we show that if $f \in H^2(\D^2)$ is a one-variable function or one-variable function in disguise, then $p_n^*[f]$ will plateau for certain consecutive values of $n$.

\begin{prop}
Let $h \in H^2(\D^2)$ with $h(0) \neq 0$.
If $f(z, w) = h(\chi_k)$ for some $k \ge 0$, then, for each $j \ge 0$ and each $N = \pounds(k, j), \ldots, \pounds(k, j+1) - 1$, we have 
\[
p_{\pounds(k, j)}^*[f] = p_{N}^*[f].
\]
\end{prop}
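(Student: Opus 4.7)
The plan is to show that each OPA in the plateau range is actually the orthogonal projection of $1$ onto a common, $n$-independent subspace, cut down from $\Pol_n f$ using the structural conclusion of Theorem \ref{disguise}.

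First, I would apply the argument in the proof of Theorem \ref{disguise} to each $n \in \{\pounds(k,j), \ldots, \pounds(k,j+1)-1\}$: decomposing $p_n^*[f]$ as its ``pure power of $\chi_k$'' part plus a remainder $r$, the orthogonality $\langle q(\chi_k) f, r f\rangle = 0$ (for the same reason as in Theorem \ref{disguise}) together with Lemma \ref{perp-factor} forces $r \equiv 0$. Hence $p_n^*[f] = q_n(\chi_k)$ for some one-variable polynomial $q_n$. Since $p_n^*[f] \in \Pol_n$, the only monomials $\chi_k^i$ that can appear in $q_n(\chi_k)$ are those with $\pounds(k,i) \le n$; in the plateau range the largest such $i$ is exactly $j$, so $\deg q_n \le j$ and consequently
\[
p_n^*[f] \in V_j := \operatorname{span}\{1, \chi_k, \chi_k^2, \ldots, \chi_k^j\}.
\]

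Next, I would observe that $V_j \cdot f \subseteq \Pol_n f$ (since $\pounds(k,j) \le n$). Because $p_n^*[f] f$ is the orthogonal projection of $1$ onto $\Pol_n f$, we have $1 - p_n^*[f] f \perp V_j \cdot f$; combined with the fact that $p_n^*[f] f$ itself lies in $V_j \cdot f$, this identifies $p_n^*[f] f$ as the orthogonal projection of $1$ onto $V_j \cdot f$. Since $V_j \cdot f$ does not depend on $n$ within the plateau range, neither does its projection, so $p_n^*[f] f = p_{\pounds(k,j)}^*[f] f$ for every such $n$; the same equality passes to $p_n^*[f]$ because $f \not\equiv 0$ makes $p \mapsto pf$ injective on polynomials.

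The main conceptual step is recognizing that, once Theorem \ref{disguise}'s structural conclusion is invoked, the OPA minimization effectively takes place over the small, $n$-independent subspace $V_j$ rather than over all of $\Pol_n$; no further analytic ingredient beyond this reduction (and the projection identification it enables) should be needed.
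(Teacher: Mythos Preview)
Your argument is correct. Both your proof and the paper's rest on the same orthogonality observation: writing $f=\sum_\ell a_\ell\chi_k^\ell$, the monomials appearing in $q(\chi_k)f$ are exactly powers of $\chi_k$, while those in $rf$ (for $r$ supported on monomials that are not powers of $\chi_k$) are never powers of $\chi_k$; hence the two pieces are orthogonal in $H^2(\D^2)$.

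The difference is only in packaging. The paper feeds this orthogonality directly into the optimal system: the Gram matrix $(\langle \chi_i f,\chi_m f\rangle)$ block-decouples into the ``power-of-$\chi_k$'' indices and the rest, and since the right-hand side $(\overline{f(0)},0,\ldots,0)^T$ lives entirely in the first block, the coefficients on the second block vanish and the first-block solution is unchanged throughout the plateau. You instead route the same orthogonality through Lemma~\ref{perp-factor} to force $p_n^*[f]\in V_j$, and then use the characterization of orthogonal projection (onto $V_j f$, which is independent of $n$ in the plateau range) to conclude. Your version is a bit longer but arguably cleaner, since it makes explicit the subspace $V_j f$ onto which the projection is actually taking place and reuses machinery already developed for Theorem~\ref{disguise}; the paper's version is quicker but leaves the block structure of the linear system to the reader.
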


\begin{proof}
It suffices to notice that if $f(z, w) = \sum_{\ell \ge 0}a_\ell\chi_k^\ell$, then $\chi_n f$ and  $\chi_m f$ are orthogonal for any $n$ and $m$ for which both $n \neq \pounds(k,j)$ for some $j \ge 0$ and $m \neq \pounds(k, j')$ for some $j'\ge 0$. 
Considering the optimal system for $f$ of degree $\pounds(k, j+1) - 1$ then immediately gives the result. 
\end{proof}

\begin{example}
For $f(z,w) = 1 - zw$, we have 
\begin{align*}
& p_0^*[f](z,w) = p_1^*[f](z,w) = p_2^*[f](z,w)  = \frac12, \\
& p_3^*[f](z,w) = p_4^*[f](z,w) = \dots = p_{11}^*[f](z,w) =  \frac23 + \frac13 zw, \\
& p_{12}^*[f](z,w) = p_{13}^*[f](z,w) = \dots = p_{21}^*[f](z,w) =  \frac34 + \frac12 zw + \frac14(zw)^2. \\
\end{align*}
\end{example}

Other interesting questions can be asked about (weakly) inner functions. For example, inner functions are bounded-- is the same true for \textit{weakly} inner functions? The answer to this turns out to be \textit{no}. 

\begin{prop}\label{unbdd}
There are weakly-inner functions which are not bounded on the bidisk.
\end{prop}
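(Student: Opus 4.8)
The plan is to construct an explicit weakly-inner function and verify it is unbounded. Recall that $f$ is weakly-inner precisely when $\langle f, \chi_k f\rangle = 0$ for all $k \geq 1$; equivalently, writing out the inner products, the Fourier/Taylor coefficients of $f$ must satisfy an orthogonality condition forcing $f$ to have $L^2(\T^2)$-modulus equal to $1$ (so $f$ is inner) while additionally being orthogonal to every $\chi_k f$. The key realization is that weakly-inner functions need not be holomorphic extensions of bounded functions: boundedness on $\D^2$ is a genuinely stronger requirement. So I would seek a singular-type inner function, or a concrete power series, that is inner (hence $|f| = 1$ a.e.\ on $\T^2$) but whose values blow up as one approaches certain boundary points from inside the bidisk.

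\smallskip

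First I would exploit the one-variable-in-disguise perspective afforded by the earlier results. A natural candidate is a singular inner function of a single combined variable, for instance something of the form $f(z,w) = \exp\!\left(-\tfrac{1+\sigma(z,w)}{1-\sigma(z,w)}\right)$ where $\sigma$ is an inner map into $\D$ built from $z,w$; such atomic singular inner functions have unimodular boundary values a.e.\ and are unbounded near the point where the exponent's real part tends to $-\infty$. I would check the weakly-inner condition directly: because $|f| = 1$ a.e.\ on $\T^2$, isometry of multiplication by $\chi_k$ gives the needed vanishing of $\langle f, \chi_k f\rangle$ provided $f$ is genuinely inner and the relevant integrals against $\overline{\chi_k}$ vanish, which one verifies using the mean-value/reproducing structure at the origin. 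Alternatively, and perhaps more cleanly, I would look for an explicit lacunary or rapidly-growing power series $f(z,w) = \sum_\ell a_\ell \chi_k^\ell$ in a single disguised variable whose coefficients are square-summable (so $f \in H^2$) but not absolutely summable in a way that forces $\sup_{\D^2}|f| = \infty$, while the orthogonality $\langle f, \chi_k f\rangle = 0$ is arranged by a sign or phase pattern on the $a_\ell$.

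\smallskip

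The concrete steps would be: (1) fix an explicit formula for $f$; (2) confirm $f \in H^2(\D^2)$ by checking square-summability of coefficients (or, for the singular inner construction, boundedness of the dilated integrals defining the $H^2$ norm, which follows from $|f| \leq 1$ on $\D^2$); (3) verify the weakly-inner identity $\langle f, \chi_k f\rangle = 0$ for all $k \geq 1$; and (4) exhibit a sequence of points in $\D^2$ along which $|f| \to \infty$, establishing unboundedness. For a singular inner function the unboundedness in step (4) is false, since singular inner functions are bounded by $1$; hence I would \emph{not} use a classical inner function, but rather a weakly-inner function that fails to be inner in the bounded sense---this is exactly the gap the proposition exploits. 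Thus the cleanest route is the power-series construction, where $f$ is weakly-inner (forcing $|f|=1$ a.e., i.e.\ inner in the $L^2$ sense) yet its radial growth is unbounded because inner in $H^2(\D^2)$ does \emph{not} imply $f \in H^\infty(\D^2)$.

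\smallskip

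The main obstacle I anticipate is reconciling steps (3) and (4): the weakly-inner condition forces $|f| = 1$ a.e.\ on the boundary, and one's first instinct is that such a function ought to be bounded inside. The whole content of the proposition is that this instinct is wrong, so the delicate part is producing a function whose boundary modulus is exactly $1$ (so that the orthogonality relations hold) while its interior values are genuinely unbounded---this requires the function to be inner in the measure-theoretic sense but to lie outside $H^\infty(\D^2)$, which is possible on the bidisk precisely because the structure theory of bounded holomorphic functions is more rigid than that of $H^2$-inner functions. I would therefore focus my effort on selecting coefficients (or a singular-measure-type construction adapted to the polydisk) that make the a.e.\ unimodularity and the interior blow-up coexist, and on rigorously certifying the unboundedness via an explicit approach path to the distinguished boundary.
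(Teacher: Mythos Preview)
Your argument rests on two claims, and both are false; together they doom the approach.

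First, you assert that the weakly-inner condition $\langle f,\chi_k f\rangle=0$ for all $k\ge 1$ forces $|f|=1$ a.e.\ on $\T^2$. It does not. Writing the inner product on the torus, the condition says $\int_{\T^2}|f|^2\,\overline{\chi_k}\,dm=0$, i.e.\ the Fourier coefficient of $|f|^2$ at each lattice point $(a,b)$ with $a,b\ge 0$ and $(a,b)\neq(0,0)$ vanishes; by conjugation the same holds for $a,b\le 0$. But the mixed frequencies $(a,b)$ with $a>0$, $b<0$ (and their conjugates) are completely unconstrained, so $|f|^2$ need not be constant on $\T^2$. (The sentence in the paper reading ``every weakly inner function is inner'' is a slip; the easy, correct implication is the reverse one.)

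Second, you claim at the end that a function can have $|f|=1$ a.e.\ on $\T^2$ yet lie outside $H^\infty(\D^2)$. This is also false: if $f\in H^2(\D^2)$ has boundary values of modulus $1$ a.e., then $|f(z,w)|^2$ is dominated by the product Poisson integral of $|f^*|^2\equiv 1$, hence $|f|\le 1$ throughout $\D^2$. So any attempt to build an unbounded function with unimodular boundary values---singular inner, lacunary, or otherwise---cannot succeed.

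The paper's proof exploits precisely the gap your first misconception hides. It takes an arbitrary shift-invariant subspace $\mathcal{M}\subset H^2(\D^2)$ and sets $\varphi=\Pi_{\mathcal{M}}1$; shift-invariance gives $\chi_k\varphi\in\mathcal{M}$, whence $\langle\varphi,\chi_k\varphi\rangle=\langle 1,\chi_k\varphi\rangle=0$ for $k\ge 1$, so $\varphi$ is weakly inner. One then invokes Rudin's result that there exist shift-invariant subspaces of $H^2(\D^2)$ containing \emph{no} bounded functions; for such $\mathcal{M}$ the function $\varphi$ is unbounded. No explicit formula is produced or needed.
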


\begin{proof}
Let $\mathcal{M} \subset H^2(\D^2)$ be any shift invariant subspace and let $\varphi: = \Pi_{\mathcal{M}}1$ be the orthogonal projection of $1$ onto $\mathcal{M}$. Since $\mathcal{M}$ is shift-invariant, we have $\chi_k \varphi \in \mathcal{M}$ for any $k\ge0$. In turn, for any $k \ge 1$, we have,
\[ 
\langle \varphi, \chi_k \varphi \rangle =\langle \Pi_{\mathcal{M}}1, \chi_k \varphi \rangle = \langle 1, \chi_k \varphi \rangle = 0.
\]
Hence, $\varphi$ is weakly inner. However, there are shift invariant subspaces of $H^2(\D^2)$ which do not possess any bounded functions (see \cite{Rudin}).
Therefore, if $\mathcal{M}$ is one of these subspaces, then $\varphi$ will be unbounded. 
\end{proof}

Up to this point, we have left existing literature, and results therein, largely untapped-- especially the literature originating from outside of the immediate OPA community. 
We conclude, in an appendix, with a few remarks about other possible tools to address the Weak Shanks Conjecture.

\section*{Appendix: Determinantal Representations}

A two-variable polynomial is called \textit{stable} if it has no zeros in the \textit{open} bidisk and \textit{strongly stable} if it has no zeros in the \textit{closed} bidisk. 
Using this nomenclature, one can state the Weak Shanks Conjecture as:
\[
\text{Any strongly stable polynomial has OPAs which are stable.}
\]
Stable polynomials have certain determinantal representations (see, e.g. \cite{MR3441374}).
The following theorem will be taken as a black box. 

\begin{theorem}
A polynomial $p(z,w)$ is stable if and only if it can be expressed as 
\[
p(z, w) = \alpha \det(I - CD),
\]
where $\alpha$ is a constant, $C$ is an $n\times n$ matrix which is contractive, and $D$ is a diagonal matrix of the form $\operatorname{diag}(z, \ldots, z, w, \ldots, w)$. 

Further, $p$ is strongly stable if and only if the above expression holds with $C$ being a strict contraction.
\end{theorem}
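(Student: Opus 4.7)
The plan is to treat the two directions separately. For the ``if'' direction, suppose $p(z,w) = \alpha \det(I-CD(z,w))$ with $C$ contractive. For any $(z,w) \in \D^2$, one has $\|D(z,w)\| = \max(|z|,|w|) < 1$, so $\|CD(z,w)\| < 1$ and $I - CD(z,w)$ is invertible by a Neumann series argument, hence $p(z,w) \ne 0$. The strongly stable variant is identical: strict contractivity of $C$ yields $\|CD(z,w)\| \le \|C\| < 1$ on all of $\overline{\D^2}$.

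For the ``only if'' direction, I would use an Agler--Kummert transfer function realization. Assume first that $p$ is strongly stable of bidegree $(n_1,n_2)$, and form the rational inner function $\Theta(z,w) := \overleftarrow{p}(z,w)/p(z,w)$, which is holomorphic on a neighborhood of $\overline{\D^2}$ and unimodular on $\T^2$. Kummert's realization theorem then produces a unitary colligation
\[
U = \begin{bmatrix} C & B \\ E & F \end{bmatrix} \colon \C^{n_1} \oplus \C^{n_2} \oplus \C \to \C^{n_1} \oplus \C^{n_2} \oplus \C,
\]
whose transfer function equals $\Theta$, with the first block $C$ acting on $\C^{n_1}\oplus\C^{n_2}$. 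A Schur-complement computation identifies the denominator of the resulting transfer function with $\det(I - CD(z,w))$ up to a unimodular constant, giving $p(z,w) = \alpha \det(I - CD(z,w))$. Unitarity of $U$ makes $C$ a contraction; in the strongly stable case, the denominator must stay bounded away from zero on $\overline{\D^2}$, which forces $\|C\| < 1$. The stable-but-not-strongly-stable case then follows by approximation: for $r \in (0,1)$, the scaled polynomial $p_r(z,w) := p(rz,rw)$ is strongly stable, so admits a representation with a strictly contractive $C_r$; letting $r \nearrow 1$ and passing to a cluster point of $\{C_r\}$ in the compact closed unit ball of $(n_1+n_2)\times(n_1+n_2)$ matrices yields a contractive $C$ realizing $p$.

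The main obstacle, and the reason this is traditionally quoted as a black box, is the Kummert realization itself. Its construction rests on an Agler decomposition
\[
1 - \overline{\Theta(\zeta,\eta)}\Theta(z,w) = K_1(z,w;\zeta,\eta)(1 - \overline{\zeta}z) + K_2(z,w;\zeta,\eta)(1 - \overline{\eta}w)
\]
into two positive semidefinite kernels, from which a finite-rank unitary colligation of the prescribed block structure can be extracted. The delicate point is the dimension count: making the block $C$ live on exactly $\C^{n_1+n_2}$ (rather than on some larger auxiliary Hilbert space) requires the Agler kernels $K_1, K_2$ to have precisely the right finite ranks, which is Kummert's original contribution and is known to fail in three or more variables. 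If I were writing out the proof in full, this is the step I would spend the most care on.
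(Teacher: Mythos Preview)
The paper does not prove this theorem at all: it is explicitly introduced with the sentence ``The following theorem will be taken as a black box'' and is simply cited from the literature on determinantal representations (e.g.\ \cite{MR3441374}). So there is no proof in the paper to compare your attempt against.

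That said, your sketch is essentially the standard argument behind the cited result, and the ``if'' direction together with the Kummert/Agler realization strategy for the ``only if'' direction are correct in outline. One soft spot worth flagging: the sentence ``the denominator must stay bounded away from zero on $\overline{\D^2}$, which forces $\|C\|<1$'' is not justified and is in fact false as stated. For instance, with $C=\begin{pmatrix}0&1\\0&0\end{pmatrix}$ and $D=\operatorname{diag}(z,w)$ one has $\|C\|=1$ while $\det(I-CD)\equiv 1$ is strongly stable; this even arises as the state block of a genuine $3\times 3$ unitary colligation. The clean way to get the strict-contraction statement is the reverse of your approximation step: first establish the contractive representation for \emph{stable} $p$ via Kummert, and then, for strongly stable $p$, observe that $p$ is nonvanishing on $(1+\epsilon)\overline{\D^2}$ for some $\epsilon>0$, so $q(z,w):=p((1+\epsilon)z,(1+\epsilon)w)$ is stable with representation $\alpha\det(I-C'D)$, whence $p(z,w)=\alpha\det\bigl(I-\tfrac{1}{1+\epsilon}C'D\bigr)$ with $\bigl\|\tfrac{1}{1+\epsilon}C'\bigr\|<1$. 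Your limiting argument $r\nearrow 1$ for the merely-stable case is fine as written.
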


\begin{example}
The polynomial $p(z,w) = 2 - z - w$ is stable but not strongly stable ($p$ is non-zero on the open bidisk but $p(1,1) = 0$), and has the determinantal represntation
\[
p(z,w) = 2 \det \left(
\begin{pmatrix}
1 & 0 \\
0 & 1
\end{pmatrix}
- 
\begin{pmatrix}
1/2 & 1/2 \\
1/2 & 1/2
\end{pmatrix}
\begin{pmatrix}
z & 0 \\
0 & w
\end{pmatrix} 
\right).
\]
Note that 
\[
\left\|
\begin{pmatrix}
1/2 & 1/2 \\
1/2 & 1/2
\end{pmatrix}
\right\|
=1.
\]
Compare this to the polynomial $q(z,w) = 4 - z - w$, which is strongly stable and has the determinantal representation
\[
q(z,w) = 4 \det \left(
\begin{pmatrix}
1 & 0 \\
0 & 1
\end{pmatrix}
- 
\begin{pmatrix}
1/4 & 1/4 \\
1/4 & 1/4
\end{pmatrix}
\begin{pmatrix}
z & 0 \\
0 & w
\end{pmatrix} 
\right),
\]
with
\[
\left\|
\begin{pmatrix}
1/4 & 1/4 \\
1/4 & 1/4
\end{pmatrix}
\right\|
=\frac{1}{\sqrt{2}} < 1.
\]

\end{example}

If $f \in H^2(\mathbb{D}^2)$ is strictly stable and has the determinantal representation 
\[
f = \det(I - AB),
\]
with appropriate $m \times m$ matrices, then if the Weak Shanks Conjecture is true, $p_n^*[f]$ must have determinantal representation
\[
p_n^*[f] = \det(I - CD),
\]
where $C$ is a $k\times k$ matrix which is a contraction, and $D$ is a diagonal matrix in $z$ and $w$. This says that the polynomial $p_n^*[f]f$ must have representation
\[
p_n^*[f]f  = \det(I - EF),
\]
where $E$ is a contraction (not necessarily strict, and not necessarily $m\times m$ or $k\times k$(!)) and $F$ is a diagonal matrix in $z$ and $w$. If the size of the matrices in the representations for $f$ and $p_n^*[f]$ are equal, then we have 
\begin{align*}
 \det(I - AB)\det(I - CD)
& = \det(I - AB - CD + ABCD)\\
& = \det(I - EF).
\end{align*}

Thus, one approach to understanding the Weak Shanks Conjecture is through understanding if such a determinantal factorization can always exist. Note that this works only for $f$ and $p_n^*[f]$ having the same representation size. However, we can augment any representation to arrive at a similar statement. In particular, if $p_n^*[f]$ has a $k \times k$ representation with $k < m$ (here $m$ is the representation size of $f$), i. e.
\[
p_n^*[f] = \det(I_k - GH),
\]
with $G$ a $k \times k$ contraction and $H$ a $k\times k$ diagonal in $z$ and $w$, then we can augment $I_k - GH$ as 
\[
\begin{pmatrix}
I_k - GH & 0 \\
0 & I_{m-k}
\end{pmatrix}
\]
to get an $m\times m$ matrix with the same determinant as $I_k - GH$. The upshot here is that the dimensions are now correct to make sense of the representation 
\[
p_n^*[f]f = \det\begin{pmatrix}
I_k - GH & 0 \\
0 & I_{m-k}
\end{pmatrix}
\det(I_m - AB).
\]
If $p_n^*[f]$ is such that its representation is of size $k > m$, then we can augment the representation of $f$ in a similar way. The matrix multiplication here turns out to be messy, so the approach may not be very friendly. However, it is nonetheless valid.

We end by pointing out that there is a natural matrix associated with an OPA. If $G = (\langle \chi_j f, \chi_k f)_{0 \le j,k \le n}$, then 
\[
p_n^*[f] = \left(\chi_0, \ldots, \chi_n\right)\ G^{-1}\left(\overline{f(0)}, 0 , \ldots, 0\right)^T. 
\]
Can we witness this as a determinant? Is it useful to note that since $G$ is a Gram matrix, that $G^{-1}$ must also be a Gram matrix? Could a factorization of $G^{-1}$ be useful?

\section*{Acknowledgements}
Many thanks to Alan Sola for discussion and in helping deduce Proposition \ref{unbdd} by pointing out the result in Rudin's book; this was done during a visit  in the Summer of 2023, for which the Department of Mathematics at Stockholm University provided partial support. Further thanks go to Greg Knese for helpful discussion.

\bibliography{SHANKS_PUB_v2}
\bibliographystyle{abbrv}

\end{document}